\newtheorem{theorem}{Theorem}
\begin{document}

\begin{frontmatter}

\title{Pattern alternations induced by nonlocal interactions}

\author[inst1]{Swadesh Pal}
\ead{spal@wlu.ca}
\author[inst1,inst2]{Roderick Melnik\corref{cor1}}
\ead{rmelnik@wlu.ca}
\author[inst3]{Malay Banerjee}
\ead{malayb@iitk.ac.in}

\cortext[cor1]{Corresponding author}
\address[inst1]{MS2Discovery Interdisciplinary Research Institute, Wilfrid Laurier University, Waterloo, Canada}
\address[inst2]{BCAM - Basque Center for Applied Mathematics, E-48009, Bilbao, Spain}
\address[inst3]{Department of Mathematics and Statistics, IIT Kanpur, Kanpur, India}

\begin{abstract}
Pattern formation is a visual understanding of the dynamics of complex systems. Patterns arise in many ways, such as the segmentation of animals, bacterial colonies during growth, vegetation, chemical reactions, etc. In most cases, the long-range diffusion occurs, and the usual reaction-diffusion (RD) model can not capture such phenomena. The nonlocal RD model, on the other hand, can fill the gap. Analytical derivation of the amplitude equations (AE) for an RD system is a valuable tool to predict the pattern selections, in particular, the stationary Turing patterns when they occur. In this paper, we analyze the conditions for the Turing bifurcation for the nonlocal model and also derive the AE for the nonlocal RD model near the Turing bifurcation threshold to describe the reason behind the pattern selections. This derivation of the AE is not only limited to the nonlocal prey-predator model, as shown in our representative example but also can be applied to other nonlocal models near the Turing bifurcation threshold. The analytical prediction agrees with numerical simulation near the Turing bifurcation threshold. Moreover, the analytical and numerical results fit each other well even more remote from the Turing bifurcation threshold for the small values of the nonlocal parameter but not for the higher values. 
\end{abstract}

\begin{keyword}
Nonlocal model \sep Kernel function \sep Turing bifurcation \sep
Spatial-Hopf bifurcation \sep Amplitude equations \sep Weakly nonlinear analysis
\end{keyword}

\end{frontmatter}

\section{Introduction}

Ordinary differential equation models of interacting populations assume their homogeneous distribution over their habitat \cite{kot2001}. However, in reality, the distribution of individuals of different species is heterogeneous and guided by the uneven distribution of favourable resources. A reaction-diffusion model of an interacting population can capture the heterogeneous distribution of the individuals of constituent species and their random movement within their habitat. The main focus of the study with reaction-diffusion models is to understand the species' self-organized distribution (patterns) due to the intra- and inter-specific interaction for favourable resources and survival. Non-homogeneous stationary distributions are classified as spatial self-organization patterns, and time-dependent heterogeneous distributions are known as dynamic patterns. 

In the ecological systems, populations interact with the other individuals at their spatial location and in nearby locations. The usual reaction-diffusion system can not capture such type of phenomena; however, a reaction-diffusion system with nonlocal interaction can. Different type of nonlocal models have been studied for prey-predator interactions in different circumstances, e.g., nonlocal dispersal \cite{sherratt2014}, nonlocal consumption of resources \cite{banerjee2016}, nonlocal intraspecific competition \cite{pal2018,bayliss2017,pal2021,pal2019}, etc. The use of reaction-diffusion equations with nonlocal interaction terms is a newly emerged area of research, whereas this modelling approach is not limited by the models of population interactions. Rather this approach is well accepted for the models in biomedical applications, the study of various diseases, nanotechnology and neuroscience \cite{review,pal2021ad,eftimie2009,sytnyk2021,v2014,paquin2020,pal2021iwbbio}. 

One of our motivations in this paper is derived from the fact that resource-consumer interactions are one of the main building blocks of several food webs and food chains. The dynamics exhibited by the resource-consumer models capture the complex interaction between several trophic levels in natural ecosystems. These models include the dynamical relationships between autotroph-herbivore, prey-predator, host-parasites, etc. According to Abrams \cite{murdoch2013}, `{\it eating is a necessity for all heterotrophic organisms when the foods are themselves living organisms, the interaction between consumer and food is predation}'. A wide variety of mathematical models are available in the literature for prey-predator interactions, but essentially they can be classified based on the Lotka-Volterra type model, Gause type model, and Kolmogorov model \cite{freedman1980deterministic}. The predator population exerts negative feedback on the prey growth through predation, and at the same time, the prey population contributes to the development of predators by supplying energy through food, which helps in the production of new off-springs \cite{fryxell2014}. These two mechanisms are modelled through the functional and numerical responses, respectively. In literature, we find mainly two types of functional responses, namely prey-dependent and predator-dependent \cite{abrams2000nature,lu2022}. For prey-dependent functional response, the rate and amount of consumption of the prey biomass by their predators depend solely upon the prey population density. On the other hand, when the functional response depends upon both the prey and predator population densities, it is called predator-dependent functional response without any ambiguity. However, a numerical response is measured through scaling of the functional response. Various types of biological mechanisms are responsible for the influence of predator population density to shape the functional response.

Mutual interference among the predators, difficulty in getting food at a low prey-to-predator ratio, forming groups to enhance the success in catching and handling large prey – are some well-known mechanisms responsible for the inclusion of predator population density in the functional response. Hunting cooperation describes the cooperative mechanism among the predator individuals to have success in searching, catching, and handling prey \cite{alves2017hunting}. The incorporation of hunting cooperation in the modelling approach results in the inclusion of predator population density in functional and numerical response terms, and hence the functional response is predator-dependent. As a result, the dynamics described by the model become a little bit more complex compared to its counterpart with the prey-dependent functional response. For prey-predator models with specialist predators, the hunting cooperation among the predators can enhance the success in catching prey; however, it exploits the resource excessively, resulting in predator extinction due to scarcity of food at high population density. This negative feedback in the growth of specialist predators is known as the component Allee effect in predators.

Two types of spatial patterns occur in a reaction-diffusion model: stationary and non-stationary. There are several mechanisms for generating various non-stationary or dynamic patterns. However, the existence of a stationary pattern is related to Turing instability. This condition gives a sufficient analytical prerequisite for the existence of the stationary pattern formation \cite{amturing,jdmurray}. The challenging issue is to obtain the analytical prediction about the types of the stationary patterns for the parameter values away from the Turing bifurcation thresholds. On the other hand, near the Turing bifurcation threshold, a finite number of Fourier modes can capture such type of pattern forming scenarios for periodic boundary conditions over a spatial domain \cite{Price,Price1994}. In this case, the solution of the partial differential equation (PDE) model can be approximated by a solution of a system of ordinary differential equations, called amplitude equations. In 1965, Eckhaus used it for the first time to reduce the Navier-Stokes equations of fluid mechanics to a system of ordinary differential equations for the amplitudes \cite{Eckhaus}. This reduction technique has been used in the theory of nonlinear wave interactions to predict the patterns of reaction-diffusion equations. Later on, Segal and Levin derived the amplitude equations for prey-predator systems by the technique known as the weakly nonlinear analysis \cite{segel1976}. This result is a normal form of the spatio-temporal model near the Turing bifurcation threshold.

In this work, we use a basic form of hunting cooperation in prey-predator interaction for the temporal dynamics \cite{alves2017hunting}. We study the existence of the non-trivial equilibrium points of the model and also their stability behaviours through different temporal bifurcations. The most popular weakly nonlinear analysis helps predict the underlying stationary Turing patterns analytically for the local and nonlocal reaction-diffusion models. In this work, we analyze the nonlocal interaction in intraspecific prey competition. For the kernel function, we restrict our attention to  ``thin-tailed'' (Gaussian) function \cite{sherratt2014}. To the best of our knowledge, the amplitude equation for the spatio-temporal models with nonlocal interactions remains unexplored and it is the primary focus of this work. In general, the weakly nonlinear analysis is a multiscale analysis with respect to a bifurcation parameter and the time derivative. In this method, the bifurcation parameter and the time derivative are expanded in terms of a small parameter \cite{zhang2012,ipsen2000,gunaratne1994,li2018}.

\section{Mathematical Model}

For the time $t>0$, suppose $u(t)$ and $v(t)$ are the prey and predator populations, respectively. Following \cite{alves2017hunting}, we consider the simplest form of a prey-predator interaction with hunting cooperation among the predators as:
\begin{subequations}\label{model1}
\begin{align}
\frac{du}{dt} &= \eta u\left(1-\frac{u}{\kappa}\right)-(1+\alpha v)uv \equiv N_{1}(u,v),\\
\frac{dv}{dt} &= (1+\alpha v)uv-v \equiv N_{2}(u,v),
\end{align}
\end{subequations}
with non-negative initial conditions. Here, $\eta$ is the per capita intrinsic growth rate and $\kappa$ is the environmental carrying capacity. The parameter $\alpha$ represents the predator hunting cooperation, and $a=0$ corresponds to the prey-predator model without hunting cooperation \cite{alves2017hunting}. All the parameters involved in the model are assumed to be positive.

We extend the temporal model (\ref{model1}) into the spatio-temporal model to account for the random movements of the species in a two-dimensional habitat. In the mathematical model, the diffusion term captures such types of movements phenomena, and it has been widely accepted in the spatio-temporal prey-predator model. Along with this, researchers have been studied different type of nonlocal interactions in the prey-predator model \cite{sherratt2014,banerjee2016,pal2018,bayliss2017,pal2021}. Following \cite{pal2018}, we consider the nonlocal interaction in the intraspecific competition of the prey population, and in this case, the corresponding integro-differential reaction-diffusion model is given by
\begin{subequations}{\label{NLM}}
\begin{align}
\frac{\partial u}{\partial t} & =  \Delta u +\eta u\left(1-\frac{\psi_{\sigma} \ast u}{\kappa}\right)-(1+\alpha v)uv, \label{NLMa}\\
\frac{\partial v}{\partial t} & = d \Delta v +(1+\alpha v)uv-v, \label{NLMb}
\end{align}
\end{subequations}
with non-negative initial conditions and periodic boundary conditions. We have chosen the non-dimensional diffusion coefficients for the prey and predator species to be $1$ and $d$, respectively. The convolution term $\psi_{\sigma} \ast u$ is defined as
$$
 (\psi_{\sigma} \ast u )(x,y,t) = \int_{-\infty}^{\infty}\int_{-\infty}^{\infty}\psi_{\sigma} (x-w,y-z)u(w,z,t)dwdz,
$$
where $\psi_{\sigma}$ is the kernel function. The parameter $\sigma$ has a prominent role in the nonlocal interactions, it captures the effective area of nonlocal interactions for the nonlocal model (\ref{NLM}). We assume that the kernel function is non-negative, even, normalized and exponentially bounded in $\mathbb{R}^{2}$. Different type of kernel functions are available in the literature \cite{sherratt2014,pal2018,bayliss2017}, but, in this work, we restrict our attention to the Gaussian kernel because it is the most widely used kernel functions in ecological models \cite{sherratt2014,lutscher2005}. It is given by
\begin{equation}{\label{PKF}}
 \psi_{\sigma}(x,y)  = \frac{1}{2\pi\sigma^{2}}e^{-\frac{x^{2}+y^{2}}{2\sigma^{2}}}.
\end{equation}
This specific choice of the kernel function satisfies all the above-mentioned assumptions. Now, we first analyze the temporal dynamics of the model, and then we move toward the local and nonlocal models.

\section{Analysis of the temporal model}

In this section, we study the non-spatial model (\ref{model1}). For analyzing the temporal model, we generally study the equilibrium points and their stabilities, which forwards to the model's bifurcation analysis. Now, the model (\ref{model1}) admits a trivial equilibrium point $E_{0}=(0,0)$ and an axial equilibrium point $E_{1}=(\kappa,0)$. Furthermore, the coexisting equilibrium point (or points) of the system (\ref{model1}) is (are) the point (or points) of the intersection of the non-trivial nullclines
\begin{eqnarray}
u = \frac{\kappa}{\eta}(\eta-(1+\alpha v)v) \equiv n_{1}(v)~\mbox{and}~u =\frac{1}{1+\alpha v} \equiv n_{2}(v),
\end{eqnarray}
inside the first quadrant of the $uv$-plane.
Suppose, $E_{*} = (u_{*},v_{*})$ denotes the components of the coexisting equilibrium point, then $u_{*}(>0)$ satisfies the polynomial equation
\begin{eqnarray}\label{cubiceqn}
\phi(u)\equiv\alpha\eta u^3-\alpha\kappa\eta
u^2-\kappa u+\kappa=0
\end{eqnarray}
and $v_*$ satisfies
\begin{equation}{\label{VSE}}
    v_*=\frac{1}{\alpha}\left(\frac{1}{u_*}-1\right).
\end{equation}
Note that $u_{*}$ has to be less than $1$ to satisfy the positivity of $v_{*}$ (see eq. (\ref{VSE})). The non-trivial prey nullcline $u = n_{1}(v)$, lying in the first quadrant of the $uv$-plane, is a monotone decreasing function in $v$, concave with respect to $u$-axis, and passes through the point $(\kappa,0)$ in the $uv$-plane. On the other hand, the non-trivial predator nullcline $u = n_{2}(v)$, lying in the first quadrant, is a segment of the hyperbola, passes through the point $(1,0)$ in the $uv$-plane. The geometry of these two nullclines ensure the existence of the unique coexisting equilibrium point $E_{*}$ for $\kappa > 1$, e.g., see Fig. \ref{fig:nullclines}(\subref{fig:nullclinesa}). Furthermore, two coexisting equilibrium points exist for $\kappa < 1$, and in this case, two nullclines intersect each other at two different points in the first quadrant of the $uv$-plane, e.g., see Fig. \ref{fig:nullclines}(\subref{fig:nullclinesb}). These points of intersections can not be determined explicitly because of the involvement of the cubic polynomial equation (\ref{cubiceqn}), and they also depend on the magnitudes of other parameters. 

As we have mentioned earlier, all the parameters involved in the cubic equation (\ref{cubiceqn}) are positive. From the Descartes rule of sign, the equation (\ref{cubiceqn}) possesses at most two positive roots, and the feasibility of coexisting equilibrium point demands $u_{*} < 1$. Depending on the parameter values, the coexisting equilibrium point can bifurcate from the axial equilibrium point or can be generated through saddle-node bifurcation. We first present the analytical conditions for the existence and the stability behaviours of these two bifurcations and verify them numerically later on. In most cases, the analytical conditions are implicit in nature.

\begin{figure}[H]
\begin{center}
        \begin{subfigure}[p]{0.45\textwidth}
                \centering
                \includegraphics[width=\textwidth]{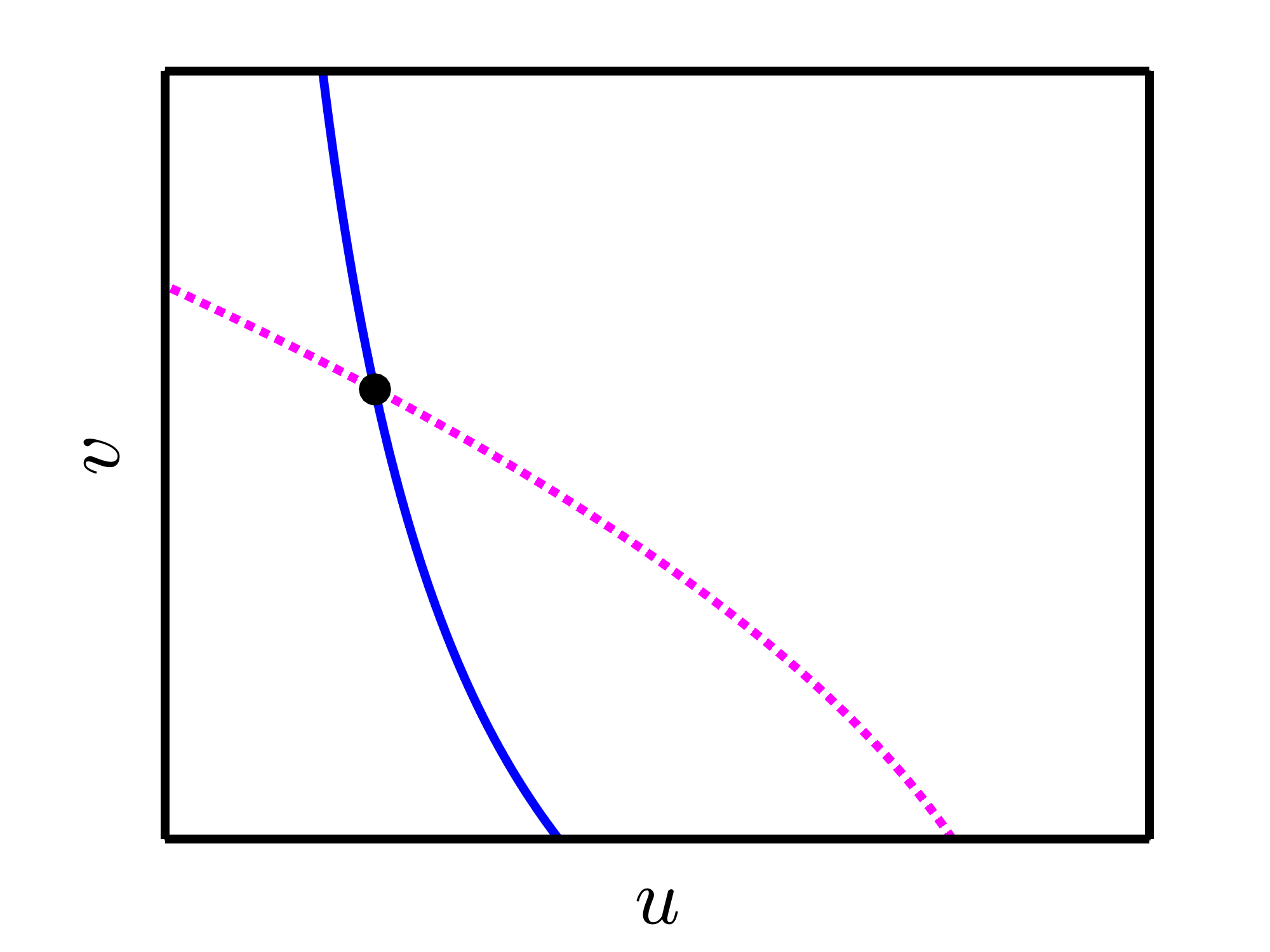}
                \caption{ }\label{fig:nullclinesa}
        \end{subfigure}%
        \begin{subfigure}[p]{0.45\textwidth}
                \centering
                \includegraphics[width=\textwidth]{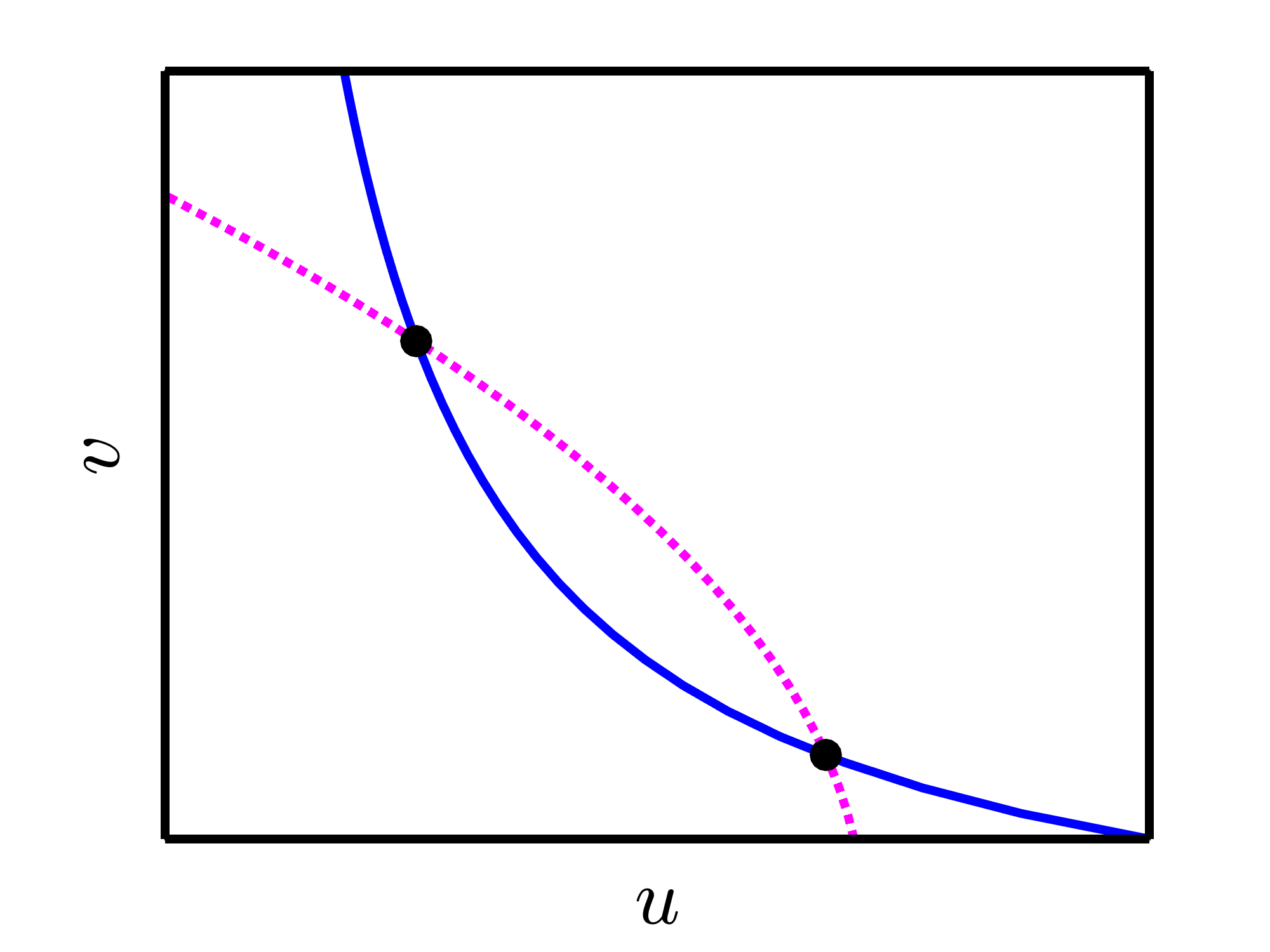}
                \caption{ }\label{fig:nullclinesb}
        \end{subfigure}%
\end{center}
\caption{(Color online) Two different scenarios of the non-trivial nullclines for the system (1). The dotted magenta color curve represents the prey nullcline, and the solid blue color curve represents the predator nullcline. }\label{fig:nullclines}
\end{figure}

Before moving towards the other equilibrium points, we first summarize the stability behaviours of the trivial equilibrium point. With the help of linear stability analysis, we can obtain the local asymptotic stability of all the equilibrium points of the system (\ref{model1}). As we can see, the trivial equilibrium point $E_{0}$ is always unstable, and in particular, it is a saddle point having unstable and stable sub-spaces along the $u$-axis and $v$-axis, respectively. Next, we summarize the stability behaviour of the predator-free equilibrium point $E_{1}$ in the following theorem.

\begin{theorem}
The system (\ref{model1}) undergoes a transcritical bifurcation at $\kappa_{TC}=1$. The equilibrium point $E_{1}$ is stable for $\kappa <1$ and is unstable for $\kappa >1$.
\end{theorem}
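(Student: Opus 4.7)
The plan is to read off both claims from the Jacobian of the reaction vector field at $E_1$, then to confirm the transcritical character by Sotomayor's theorem. First I would compute $J(E_1)$ by evaluating the partial derivatives of $N_1, N_2$ at $(u, v) = (\kappa, 0)$; the resulting matrix is upper triangular with diagonal entries $-\eta$ and $\kappa - 1$, so its eigenvalues are $\lambda_1 = -\eta$ and $\lambda_2 = \kappa - 1$. Since $\eta > 0$ by assumption, $\lambda_1 < 0$ always, and the stability claim follows at once: for $\kappa < 1$ both eigenvalues are negative so $E_1$ is asymptotically stable, whereas for $\kappa > 1$ we have $\lambda_2 > 0$ and $E_1$ is a saddle, hence unstable. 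At $\kappa_{TC} = 1$ the eigenvalue $\lambda_2$ crosses zero, the generic signature of a bifurcation.

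To identify the bifurcation as transcritical I would apply Sotomayor's theorem. Because the equilibrium $E_1 = (\kappa, 0)$ itself drifts with the parameter, I would first translate coordinates via $\tilde u = u - \kappa$ so that the fixed point sits at the origin for every $\mu := \kappa - 1$; the linearization is unaffected. Denote the right null vector of the Jacobian at $\mu = 0$ by $\mathbf{r} = (1, -\eta)^{\top}$ and the left null vector by $\mathbf{l} = (0, 1)^{\top}$, and let $F$ be the shifted vector field. I would then verify the three conditions $\mathbf{l}^{\top} F_\mu(0, 0, 0) = 0$, $\mathbf{l}^{\top}[DF_\mu(0,0,0)\,\mathbf{r}] \neq 0$, and $\mathbf{l}^{\top}[D^2 F(0,0,0)(\mathbf{r}, \mathbf{r})] \neq 0$; I expect the latter two to evaluate to $-\eta$ and $2\eta(\alpha\eta - 1)$ respectively, both nonzero under the generic condition $\alpha\eta \neq 1$.

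As a sanity check I would note from (\ref{cubiceqn}) and (\ref{VSE}) that $\phi(1)$ reduces to $\alpha\eta(1 - \kappa)$ and therefore vanishes exactly at $\kappa = 1$; at that point $v_* = 0$, so the coexistence equilibrium $E_*$ collides with $E_1$ precisely at the bifurcation value. Implicit differentiation of $\phi$ at $(u, \kappa) = (1, 1)$ then gives a nonzero slope $du_*/d\kappa = \alpha\eta/(\alpha\eta - 1)$, meaning the two equilibrium branches cross transversally in parameter space and exchange stability, in line with the transcritical normal form $\dot Y = \mu Y + c Y^2$.

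The main obstacle is a technical one: applied naively to the original $(u, v)$ system, Sotomayor's middle condition evaluates to zero, which looks like a failure of the test but merely reflects that $E_1$ itself moves with $\kappa$; the shift $\tilde u = u - \kappa$ absorbs this drift and makes the standard formulas directly applicable. The only genuinely degenerate case is $\alpha\eta = 1$, where the second-order Sotomayor quantity vanishes and a higher-order (potentially pitchfork) analysis would be required, but that is a non-generic codimension-two situation outside the scope of the theorem.
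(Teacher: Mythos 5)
Your proposal is correct and follows the same core route as the paper: the upper-triangular Jacobian at $E_{1}$ with eigenvalues $-\eta$ and $\kappa-1$ gives the stability statement immediately, and the bifurcation type is pinned down via Sotomayor's transversality conditions with the same null vectors $z=(1,-\eta)^{T}$, $w=(0,1)^{T}$ and the same cubic coefficient $2\eta(\alpha\eta-1)$. Where you genuinely diverge is on the middle condition. The paper computes $w^{T}\left[D\mathbf{F}_{\kappa}(E_{1},\kappa_{TC})z\right]$ in the original $(u,v)$ coordinates, finds it equal to zero, and simply records the bifurcation as ``degenerate.'' You instead diagnose that this zero is an artifact of the equilibrium branch $E_{1}=(\kappa,0)$ drifting with the parameter, translate via $\tilde u=u-\kappa$ so the branch is pinned at the origin, and recompute the condition as $-\eta\neq 0$; I verified this calculation and it is right, so your version actually establishes a genuine (non-degenerate) transcritical bifurcation for $\alpha\eta\neq 1$, which is a sharper conclusion than the paper draws. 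Your sanity check via $\phi(1)=\alpha\eta(1-\kappa)$ and the transversal crossing $du_{*}/d\kappa=\alpha\eta/(\alpha\eta-1)$ is an independent confirmation the paper does not include, and it correctly identifies $\alpha\eta=1$ as the only truly degenerate case, consistent with the paper's nonvanishing requirement on the second-order term. In short: same skeleton, but your treatment of the parameter-dependent equilibrium is the more careful one.
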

\begin{proof}
The Jacobian matrix for the system (\ref{model1}), evaluated at $E_{1} = (\kappa,0)$, is given by
\begin{eqnarray*}
\textbf{J}_1=\left[\begin{array}{cc}
-\eta & -\kappa \\
0 & -1+\kappa \\
\end{array}\right].
\end{eqnarray*}
Therefore, the equilibrium point $E_{1}$ is stable for $\kappa <1$ and unstable for $\kappa >1$. Furthermore, the matrix $\textbf{J}_1$ has a simple zero-eigenvalue for $\kappa = 1$. Now, at $\kappa = 1$, the eigenvectors of $\textbf{J}_{1}$ and $\textbf{J}_{1}^{T}$ associated with the simple zero-eigenvalues are $z = (1,-\eta)^{T}$ and $w = (0, 1)^{T}$, respectively. Finally, we obtain the transversality conditions for the transcritical bifurcation \cite{Perko} at $\kappa = \kappa_{TC}$, and they are given by
$$w^T\mathbf{F}_\kappa(E_1,\kappa_{TC})=0, $$ 
$$w^T\left[D\mathbf{F}_\kappa(E_1,\kappa_{TC})z\right]=0, $$ 
$$w^T\left[D^2\mathbf{F}(E_1,\kappa_{TC})(z,z)\right]=2\eta(\alpha\eta-1)\neq 0, $$
where $\mathbf{F} = (N_{1},N_{2})^{T}$. In particular, the transcritical bifurcation is degenerate as $w^T\left[D\mathbf{F}_\kappa(E_1,\kappa_{TC})z\right]=0$.
\end{proof}

\begin{theorem}
The system (\ref{model1}) undergoes a saddle-node bifurcation at $\eta_{SN}$ for $u_{*}<1$, and the bifurcation threshold is defined implicitly as
\begin{equation*}
    \eta_{SN} = \frac{\kappa (2-u_{*})}{\alpha u_{*}^{3}}.
\end{equation*}
\end{theorem}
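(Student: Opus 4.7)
My plan is to characterize the saddle-node bifurcation as the coalescence of two coexisting equilibria, which forces the cubic $\phi(u) = \alpha\eta u^3 - \alpha\kappa\eta u^2 - \kappa u + \kappa$ to acquire a double root at $u = u_*$. This is consistent with the Descartes-rule observation made earlier that $\phi$ admits at most two positive roots. I will use the double-root condition to derive the announced closed form, and then verify Sotomayor's conditions to certify that the bifurcation is genuinely of saddle-node type, mirroring the style of the transcritical proof in Theorem~1.

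First I would impose $\phi(u_*)=0$ and $\phi'(u_*)=0$ simultaneously. From $\phi'(u_*)=0$ one obtains $\alpha\eta u_*(3u_*-2\kappa) = \kappa$, equivalently $\eta = \kappa/[\alpha u_*(3u_*-2\kappa)]$. Substituting this into $\phi(u_*)=0$ and simplifying reduces the double-root condition to the quadratic
\begin{equation*}
    2u_*^2 - (\kappa+3)u_* + 2\kappa = 0,
\end{equation*}
equivalently $\kappa(2-u_*) = u_*(3-2u_*)$. A short manipulation then gives $3u_*-2\kappa = u_*^2/(2-u_*)$, and feeding this back into the expression for $\eta$ yields $\eta_{SN} = \kappa(2-u_*)/(\alpha u_*^3)$, so that $u_*$ is implicitly determined by the quadratic above together with the feasibility requirement $0<u_*<1$. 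As a cross-check, the same value $\eta_{SN}$ should emerge from setting $\det \mathbf{J}_*=0$ in the Jacobian at $E_*$, which, after applying the equilibrium identities $(1+\alpha v_*)u_*=1$ and $\alpha u_* v_* = 1-u_*$, has the simplified entries $J_{12}=u_*-2$, $J_{22}=1-u_*$, $J_{21}=v_*/u_*$, and $J_{11}=\eta(1-2u_*/\kappa)-v_*/u_*$.

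To certify the bifurcation, I would next construct the right and left null eigenvectors $z$ and $w$ of $\mathbf{J}_*$ at $\eta=\eta_{SN}$ and check the Sotomayor conditions $w^T\mathbf{F}_\eta(E_*,\eta_{SN})\neq 0$ (transversality in the parameter) and $w^T[D^2\mathbf{F}(E_*,\eta_{SN})(z,z)]\neq 0$ (non-degenerate quadratic part), where $\mathbf{F}=(N_1,N_2)^T$. The main obstacle I anticipate is the algebraic bookkeeping in the second quantity: $w^T[D^2\mathbf{F}(z,z)]$ is a polynomial in $u_*,v_*,\alpha,\eta,\kappa$ that must be reduced, using the equilibrium identities together with the constraint $2u_*^2-(\kappa+3)u_*+2\kappa=0$, to an expression whose nonvanishing can be read off. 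Along the way, one must also ensure $2-u_*\neq 0$ and $3u_*-2\kappa\neq 0$ so that no denominator vanishes, and verify $\eta_{SN}>0$, which is evident from $0<u_*<1$ and the positivity of the remaining factors in $\kappa(2-u_*)/(\alpha u_*^3)$.
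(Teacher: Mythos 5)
Your derivation is correct, but it reaches the threshold by a genuinely different route than the paper. The paper works geometrically: it characterizes the saddle-node as a tangency of the two non-trivial nullclines, which forces $\det(\mathbf{J}_*)=0$ while $\mathrm{tr}(\mathbf{J}_*)\neq 0$, and then reads off $\eta_{SN}$ as the positive root of the implicit relation $\alpha\eta u_*=\kappa(1+\alpha v_*)(1+2\alpha v_*)$ (which, after substituting $1+\alpha v_*=1/u_*$ and $1+2\alpha v_*=(2-u_*)/u_*$, is exactly your formula). You instead impose the double-root condition $\phi(u_*)=\phi'(u_*)=0$ on the cubic, which is an equivalent characterization of the coalescence of the two coexisting equilibria; your algebra checks out, and your cross-check via $\det\mathbf{J}_*=0$ with the simplified entries $J_{12}=u_*-2$, $J_{22}=1-u_*$ is consistent with the paper's Jacobian. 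Your route actually buys something the paper does not state: the auxiliary quadratic $2u_*^2-(\kappa+3)u_*+2\kappa=0$ pins down $u_*$ at the fold in terms of $\kappa$ alone (with discriminant $(\kappa-1)(\kappa-9)$, consistent with the paper's observation that two interior equilibria require $\kappa<1$), making the ``implicit'' threshold considerably more explicit. Two small points to tighten: the Sotomayor verification you defer is less onerous than you anticipate, since the paper's computation gives $w^{T}\left[D^{2}\mathbf{F}(E_*,\eta_{SN})(z,z)\right]=-2\eta/\kappa-2(1+\alpha v_*)/(\alpha u_*^{2})$, a sum of strictly negative terms whose nonvanishing is immediate; and you should state explicitly that the zero eigenvalue is simple, i.e.\ $\mathrm{tr}(\mathbf{J}_*)=u_*(\alpha v_*-\eta/\kappa)\neq 0$, so that the left and right null vectors you propose to use are well defined (this fails only at the Bogdanov--Takens point of Theorem 5).
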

\begin{proof}
As we have discussed earlier, two interior equilibrium points can be generated through a saddle-node bifurcation when two non-trivial nullclines $f(u,v)=\eta (1-u/\kappa)-(1+\alpha v)v$ and $g(u,v)=(1+\alpha v)u-1$ touch (excluding the intersecting case) each other at $E_{*} = (u_{*},v_{*})$ in the first quadrant. In this case, at $E_{*}$, both the nullclines share a common tangent. Therefore, at $E_{*}$, we must have
\begin{equation}{\label{CCT}}
    \left.-\frac{f_u}{f_v}\right|_{E_{*}} = \left.-\frac{g_u}{g_v}\right|_{E_{*}}.
\end{equation}
Now, the Jacobian matrix for the system (\ref{model1}) at $E_{*}$ is given by
\begin{eqnarray*}
\textbf{J}_{*}=\left[\begin{array}{cc}
uf_{u} & uf_{v} \\
vg_{u} & vg_{v} \\
\end{array}\right]_{E_*}.
\end{eqnarray*}
By using (\ref{CCT}), we obtain $\det(\textbf{J}_{*}) = 0$ and $\textrm{tr}(J_*)\,=\,u_{*}(\alpha v_{*}-\eta /\kappa) \neq 0$. Therefore, the matrix $\textbf{J}_{*}$ has a zero eigenvalue with multiplicity one. If we solve $\det(\textbf{J}_{*}) = 0$ implicitly for $\eta$, we can find the threshold for $\eta_{SN}$ by which the saddle-node bifurcation exists. To verify the transversality conditions for the saddle-node bifurcation, we consider $\eta$ as the bifurcation parameter denoted by $\eta_{SN}$. The saddle-node bifurcation threshold $\eta_{SN}$ is a positive root of the implicit equation $\alpha\eta u_*=\kappa(1+\alpha v_*)(1+2\alpha v_*)$. Hence, at $\eta=\eta_{SN}$, the matrix $\textbf{J}_*$ has a simple zero eigenvalue, while its eigenvectors, associated with zero eigenvalue of the matrices $\textbf{J}_{*}$ and $\textbf{J}_{*}^{T}$, are 
\begin{eqnarray*}
z=\left[\begin{array}{c}
1 \\
-\frac{1+\alpha v_*}{\alpha u_*} \\
\end{array}\right]\,\,\textrm{and}\,\,
w=\left[\begin{array}{c}
1 \\
\frac{1+2\alpha v_*}{\alpha v_*} \\
\end{array}\right],
\end{eqnarray*}
respectively. The transversality conditions for the saddle-node bifurcation are as follows:
$$w^t\mathbf{F}_\eta(E_*,\eta_{SN})\,=\,u_*\left(1-\frac{u_*}{\kappa}\right)\,\neq\, 0, $$ 
$$w^t\left[D^2\mathbf{F}(E_*,\eta_{SN})(z,z)\right]\,=\,-\frac{2\eta}{\kappa}-\frac{2}{\alpha u_*^2}(1+\alpha v_*)\,\neq\, 0. $$
\end{proof}

The considered model admits either one or two interior equilibrium points, but none of the components can be determined explicitly in either of these cases. This prevents us from finding the exact condition for local asymptotic stability of the coexisting equilibrium point. However, we summarize the local asymptotic stability condition for a typical coexisting equilibrium point $E_{*}=(u_{*},v_{*})$ in an implicit way in the following theorem.

\begin{theorem}
The coexisting equilibrium point $E_{*} = (u_{*},v_{*})$ of the system (\ref{model1}) is locally asymptotically stable if $\alpha > \kappa v_{*}/\eta$ and $\kappa (1+\alpha v_{*})(1+2\alpha v_{*})>\eta\alpha u_{*}$ with $u_{*} < 1$.
\end{theorem}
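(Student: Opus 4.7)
The plan is to apply the Routh--Hurwitz criterion to the Jacobian $\mathbf{J}_*$ already assembled in the proof of Theorem~2. Because $f(E_*)=g(E_*)=0$ on the two non-trivial nullclines, the factored form
\[
\mathbf{J}_* = \begin{pmatrix} u_* f_u & u_* f_v \\ v_* g_u & v_* g_v \end{pmatrix}_{\!E_*}
\]
depends only on the four first partials of $f$ and $g$, which I would read off first.

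Explicitly, $f_u = -\eta/\kappa$, $f_v = -(1+2\alpha v_*)$, $g_u = 1+\alpha v_*$, and $g_v = \alpha u_*$, so substituting yields
\[
\mathrm{tr}(\mathbf{J}_*) = u_*\!\left(\alpha v_* - \tfrac{\eta}{\kappa}\right), \qquad \det(\mathbf{J}_*) = u_* v_*\!\left[(1+\alpha v_*)(1+2\alpha v_*) - \tfrac{\alpha\eta u_*}{\kappa}\right].
\]
The trace expression already appeared in the saddle--node proof; the determinant is a straightforward cofactor computation.

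Next, since $E_*$ is hyperbolic in the parameter range of interest (both inequalities in the theorem are strict), local asymptotic stability is equivalent to both eigenvalues of $\mathbf{J}_*$ having negative real parts, which for a $2\times 2$ matrix reduces by Routh--Hurwitz to $\mathrm{tr}(\mathbf{J}_*)<0$ together with $\det(\mathbf{J}_*)>0$. Using $u_*, v_*>0$, the trace condition collapses to $\alpha v_* < \eta/\kappa$, which rearranges to recover the first inequality in the theorem statement, and the determinant condition rearranges directly to $\kappa(1+\alpha v_*)(1+2\alpha v_*) > \eta \alpha u_*$, which is the second. The assumption $u_*<1$ is inherited from the feasibility requirement $v_*>0$ via~(\ref{VSE}).

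There is no real obstacle here: the argument is a routine computation followed by a one-line invocation of Routh--Hurwitz. The only point worth emphasizing is that, because $u_*$ is only implicitly defined through the cubic~(\ref{cubiceqn}), the sufficient conditions cannot be translated back to closed-form inequalities in the original parameters $(\eta,\kappa,\alpha)$, which is precisely why the theorem is phrased in terms of the coordinates of $E_*$ itself.
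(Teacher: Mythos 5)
Your approach is the same as the paper's: assemble the Jacobian at $E_*$ and apply the two--dimensional Routh--Hurwitz conditions $\mathrm{tr}(\mathbf{J}_*)<0$ and $\det(\mathbf{J}_*)>0$. Your factored per-capita form, the four partial derivatives, and the resulting trace and determinant all agree with (\ref{jacob1}), and the determinant condition does rearrange to $\kappa(1+\alpha v_*)(1+2\alpha v_*)>\eta\alpha u_*$ exactly as you say. The one step that does not go through is your claim that the trace condition ``rearranges to recover the first inequality in the theorem statement.'' From $\mathrm{tr}(\mathbf{J}_*)=u_*\left(\alpha v_*-\eta/\kappa\right)$ and $u_*>0$, negativity of the trace is equivalent to $\alpha v_*<\eta/\kappa$, i.e.\ $\alpha<\eta/(\kappa v_*)$. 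The stated condition $\alpha>\kappa v_*/\eta$ says $\alpha\eta>\kappa v_*$, which is neither equivalent to nor implied by $\alpha\kappa v_*<\eta$: the expressions $\eta/(\kappa v_*)$ and $\kappa v_*/\eta$ are reciprocals of one another, and the inequality points the opposite way. To be fair, the paper's own proof asserts the same identification without showing the algebra, and the same expression $\alpha_H=\kappa v_*/\eta$ reappears in the Hopf-bifurcation theorem, so the discrepancy originates in the statement rather than in your computation. But as a proof of the theorem \emph{as written}, the final rearrangement is a genuine gap: what you have actually established is local asymptotic stability under $\alpha\kappa v_*<\eta$ together with the determinant condition, and you should flag the mismatch rather than assert the equivalence.
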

\begin{proof}
For the feasibility of $v_{*}$, we assume $u_{*} < 1$. The Jacobian matrix for (\ref{model1}) at $E_{*}$ is given by
\begin{equation}\label{jacob1}
\textbf{J}_{*}=\begin{bmatrix}
 -\frac{\eta u_{*}}{\kappa} & -(1+2\alpha v_{*})u_{*} \\
(1+\alpha v_{*})v_{*} & \alpha u_{*}v_{*}
\end{bmatrix}.
\end{equation}
Therefore, the equilibrium point $E_{*}$ is locally asymptotically stable if $\mbox{tr}(\textbf{J}_{*})<0$ and $\det(\textbf{J}_{*})>0$ hold, i.e., $\alpha > \kappa v_{*}/\eta$ and $\kappa(1+\alpha v_{*})(1+2\alpha v_{*})>\eta\alpha u_{*}$. 
\end{proof}

Depending on the parameter values, sometimes the implicit condition $\kappa(1+\alpha v_*)(1+2\alpha v_*)>\eta\alpha u_*$ holds, but the other condition $\alpha>\kappa v_*/\eta$ is violated, and in this case, the coexisting equilibrium point loses its stability through the Hopf bifurcation. We summarize all the conditions for the Hopf bifurcation in the following theorem.

\begin{theorem}
The coexisting equilibrium point $E_{*} = (u_{*},v_{*})$ with $u_{*} < 1$ undergoes a Hopf bifurcation when $\alpha$ crosses the implicit threshold $\alpha_{H} = \kappa v_{*}/\eta$, while maintaining the inequality $\kappa(1+\alpha v_{*})(1+2\alpha v_{*})>\eta\alpha u_{*}$.
\end{theorem}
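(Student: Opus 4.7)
The plan is to verify the two classical hypotheses for a (non-degenerate) Hopf bifurcation at $\alpha = \alpha_H$: that the Jacobian $\textbf{J}_*$ possesses a pair of purely imaginary eigenvalues, and that these eigenvalues cross the imaginary axis transversally as $\alpha$ varies. Starting from the Jacobian in \eqref{jacob1}, I would first compute
$$\mbox{tr}(\textbf{J}_*) = u_*\!\left(\alpha v_* - \frac{\eta}{\kappa}\right), \qquad \det(\textbf{J}_*) = u_* v_*\!\left[(1+\alpha v_*)(1+2\alpha v_*) - \frac{\eta\alpha u_*}{\kappa}\right].$$
By the very definition of the threshold (encoded by the relation $\alpha v_* = \eta/\kappa$ at $\alpha_H$), the trace vanishes at $\alpha = \alpha_H$. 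The maintained inequality $\kappa(1+\alpha v_*)(1+2\alpha v_*) > \eta\alpha u_*$ then ensures $\det(\textbf{J}_*) > 0$ there, so the eigenvalues form a purely imaginary conjugate pair $\lambda(\alpha_H) = \pm i\omega_0$ with $\omega_0 = \sqrt{\det(\textbf{J}_*)}$.

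With purely imaginary eigenvalues established, the next step is the transversality condition. Since $\det(\textbf{J}_*) > 0$ persists in a neighbourhood of $\alpha_H$ by continuity, the eigenvalues remain a complex-conjugate pair there with $\mbox{Re}\,\lambda(\alpha) = \mbox{tr}(\textbf{J}_*(\alpha))/2$, so it suffices to show $\frac{d}{d\alpha}\mbox{tr}(\textbf{J}_*)\big|_{\alpha_H} \neq 0$. This requires accounting for the implicit $\alpha$-dependence of $(u_*, v_*)$, which is governed by the cubic \eqref{cubiceqn} together with \eqref{VSE} (equivalently, by $u_*(1+\alpha v_*) = 1$ and $v_* = \eta u_*(1-u_*/\kappa)$). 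I would differentiate these equilibrium relations implicitly to extract $du_*/d\alpha$ and $dv_*/d\alpha$. Because the factor $\alpha v_* - \eta/\kappa$ vanishes at $\alpha_H$, the derivative of the trace collapses to
$$\left.\frac{d}{d\alpha}\mbox{tr}(\textbf{J}_*)\right|_{\alpha_H} = u_*\!\left(v_* + \alpha_H\,\frac{dv_*}{d\alpha}\right),$$
and I would verify that this quantity is nonzero provided we are away from the saddle-node locus $\eta = \eta_{SN}$ from the preceding theorem.

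I expect the main obstacle to be precisely this last verification. The implicit function theorem applied to the equilibrium system produces $du_*/d\alpha$ and $dv_*/d\alpha$ with denominator equal (up to a harmless factor) to $\det(\textbf{J}_*)$, so the entire transversality computation is well-posed exactly where the preceding step guarantees purely imaginary eigenvalues; however, care is required to avoid the degeneracy at $\eta = \eta_{SN}$, where the coexistence branch folds and $dv_*/d\alpha$ blows up. A cleaner reformulation is to parametrise the equilibrium branch by $v_*$ and observe that the Hopf condition $\alpha v_* = \eta/\kappa$ is crossed transversally iff the map $v_* \mapsto \alpha(v_*) v_*$ has nonzero derivative at the threshold; this geometric reading, together with the positivity of $\det(\textbf{J}_*)$, closes the argument via the standard Hopf bifurcation theorem \cite{Perko}, producing a one-parameter family of small-amplitude periodic orbits bifurcating from $E_*$ as $\alpha$ crosses $\alpha_H$.
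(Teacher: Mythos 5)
Your proposal is correct and follows essentially the same route as the paper: the vanishing of $\mbox{tr}(\textbf{J}_{*})=u_{*}(\alpha v_{*}-\eta/\kappa)$ defines $\alpha_{H}$, the maintained inequality gives $\det(\textbf{J}_{*})>0$ and hence a purely imaginary conjugate pair, and the bifurcation is closed by the transversality condition $\frac{d}{d\alpha}\mathrm{Re}(\lambda)\big|_{\alpha_{H}}\neq 0$. If anything you are more careful than the paper, which merely displays the transversality condition as a requirement without verifying it, whereas you outline an explicit check via implicit differentiation of the equilibrium relations and correctly flag the degeneracy at the saddle-node locus as the only place where that check could fail.
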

\begin{proof}
The Jacobian matrix for (\ref{model1}) at $E_{*}$ is given in (\ref{jacob1}) and the trace of the matrix $\textbf{J}_{*}$ is equal to $(\alpha v_{*}-\eta/\kappa)u_{*}$. The trace of the Jacobian matrix $\textbf{J}_{*}$ equated to zero gives the Hopf bifurcation threshold in terms of $\alpha$ implicitly as $\alpha_{H} = \kappa v_{*}/\eta$. The other condition $\kappa(1+\alpha v_{*})(1+2\alpha v_{*})>\eta\alpha u_{*}$ ensures that the determinant of the Jacobian matrix $\textbf{J}_{*}$ is positive. The transversality condition for the Hopf bifurcation is given by
\begin{equation}
    \left.\frac{d}{d\alpha}(\textrm{Re}(\lambda))\right|_{E_{*};\alpha_{H}} \neq 0,
\end{equation} where $\lambda$ is a root of the Jacobian matrix $\textbf{J}_{*}$. 
\end{proof}

Until now, we have explained the generation of coexistence equilibrium points through saddle-node bifurcation and their destabilization through Hopf bifurcation. Due to the implicit involvement of the non-trivial equilibrium point with the parameters, we have chosen $\eta$ and $\alpha$ as bifurcation parameters for finding the bifurcation thresholds. We can also find saddle-node and Hopf bifurcation curves in the two-dimensional parameter plane by considering $\eta$ and $\alpha$ as bifurcation parameters. Therefore, it is expected that the two local bifurcation curves will intersect at a Bogdanov-Takens bifurcation point of co-dimension two. 

\begin{theorem} The coexisting equilibrium point $E_*$ undergoes a Bogdanov-Takens bifurcation when the implicit parametric conditions $\alpha_{BT}=\kappa v_*/\eta$ and $\eta_{BT} = \kappa (2-u_{*})/\alpha u_{*}^3$ are satisfied.
\end{theorem}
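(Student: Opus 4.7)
The plan is to reduce the statement to the standard Bogdanov--Takens (BT) criterion: at the claimed thresholds the Jacobian $\mathbf{J}_*$ has a double zero eigenvalue with a one-dimensional eigenspace, and the reduced quadratic normal form satisfies the usual non-degeneracy and transversality conditions.

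First I would observe that $\alpha_{BT}=\kappa v_*/\eta$ is exactly the condition $\textrm{tr}(\mathbf{J}_*)=0$ exploited in the proof of Theorem~4, while $\eta_{BT}=\kappa(2-u_*)/(\alpha u_*^3)$ is the condition $\det(\mathbf{J}_*)=0$ obtained in the proof of Theorem~2 through the identity $\alpha\eta u_*=\kappa(1+\alpha v_*)(1+2\alpha v_*)$ together with $\alpha v_*=(1-u_*)/u_*$ from (\ref{VSE}). Hence both invariants of $\mathbf{J}_*$ vanish simultaneously at $(\eta_{BT},\alpha_{BT})$, so its spectrum is $\{0,0\}$. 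Since the off-diagonal entries $-(1+2\alpha v_*)u_*$ and $(1+\alpha v_*)v_*$ in (\ref{jacob1}) are strictly non-zero for a feasible coexistence equilibrium, $\mathbf{J}_*$ is not the zero matrix and is therefore genuinely nilpotent with a one-dimensional kernel, as required for BT.

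The next step would be to build a Jordan basis $\{p_0,p_1\}$ with $\mathbf{J}_* p_0=0$ and $\mathbf{J}_* p_1=p_0$, together with a dual basis $\{q_0,q_1\}$ satisfying $\mathbf{J}_*^T q_1=0$, $\mathbf{J}_*^T q_0=q_1$, normalized so that $\langle q_0,p_0\rangle=\langle q_1,p_1\rangle=1$ and $\langle q_0,p_1\rangle=0$. Writing the shifted unknown as $U=x\,p_0+y\,p_1$ and projecting (\ref{model1}) onto $q_0,q_1$ reduces the local dynamics to
\begin{equation*}
\dot{x}=y+a_{20}x^2+a_{11}xy+a_{02}y^2+\mathcal{O}(\|(x,y)\|^3),\qquad \dot{y}=b_{20}x^2+b_{11}xy+b_{02}y^2+\mathcal{O}(\|(x,y)\|^3),
\end{equation*}
where the $a_{ij},b_{ij}$ are explicit rational functions of $(u_*,v_*,\alpha,\eta,\kappa)$ coming from the Hessians $D^2N_1(E_*)$ and $D^2N_2(E_*)$ paired against $p_0,p_1$. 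A standard near-identity change of variables brings this to the BT normal form $\dot X=Y$, $\dot Y=AX^2+BXY$, and the non-degeneracy requirements are $A=b_{20}\neq 0$ and $B=b_{11}+2a_{20}\neq 0$; since the nonlinearities of (\ref{model1}) are polynomials of degree at most three, both $A$ and $B$ are closed-form expressions that can be shown to be non-zero off a lower-dimensional exceptional subset of parameter space.

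The last step is transversality: one must verify that the map
\begin{equation*}
\Phi:(\eta,\alpha)\longmapsto \bigl(\textrm{tr}\,\mathbf{J}_*(\eta,\alpha),\;\det \mathbf{J}_*(\eta,\alpha)\bigr)
\end{equation*}
is a local diffeomorphism at $(\eta_{BT},\alpha_{BT})$, so that $(\eta,\alpha)$ is a genuine two-parameter unfolding of the nilpotent singularity. The main obstacle will be precisely this step, because $(u_*,v_*)$ depends on $(\eta,\alpha)$ only implicitly through the cubic (\ref{cubiceqn}) and (\ref{VSE}): the entries of $D\Phi$ must be obtained by implicit differentiation of $\phi(u_*)=0$, and then simplified using the substitutions $\alpha v_*=(1-u_*)/u_*$, $1+\alpha v_*=1/u_*$, $1+2\alpha v_*=(2-u_*)/u_*$, together with the two BT identities themselves. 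Once $\det D\Phi\neq 0$ has been established, the standard BT unfolding theorem (see \cite{Perko}) yields that $E_*$ undergoes a Bogdanov--Takens bifurcation at $(\eta_{BT},\alpha_{BT})$, as claimed.
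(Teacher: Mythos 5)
Your first paragraph is, in substance, exactly the paper's proof: the authors compute $\textrm{tr}(\mathbf{J}_*)$ and $\det(\mathbf{J}_*)$, set both to zero, use $1+\alpha v_*=1/u_*$ to extract the two implicit thresholds, and conclude by noting that the Jordan canonical form of $\mathbf{J}_*$ at $(\alpha_{BT},\eta_{BT})$ is $\left[\begin{smallmatrix}0&1\\0&0\end{smallmatrix}\right]$ — i.e.\ the double zero eigenvalue comes with a one-dimensional eigenspace, which is your nilpotency observation. The paper stops there. Everything in your second and third paragraphs (the Jordan/dual basis reduction to the quadratic normal form $\dot X=Y$, $\dot Y=AX^2+BXY$, the non-degeneracy requirements $A\neq 0$, $B\neq 0$, and the transversality of $(\eta,\alpha)\mapsto(\textrm{tr}\,\mathbf{J}_*,\det\mathbf{J}_*)$) is a genuinely more complete program than what the paper carries out, and you are right that these are the conditions a full Bogdanov--Takens verification requires and that the implicit dependence of $(u_*,v_*)$ on $(\eta,\alpha)$ through the cubic (\ref{cubiceqn}) is the real obstruction. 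Be aware, though, that in your write-up these extra steps remain a plan rather than a proof: you assert that $A$ and $B$ are ``non-zero off a lower-dimensional exceptional subset'' and you condition on ``once $\det D\Phi\neq 0$ has been established'' without actually computing either quantity, so as written you have proved no more than the paper does — the degeneracy of the linear part — while correctly flagging what both arguments leave open.
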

\begin{proof} At $E_*$, the trace and determinant of $\textbf{J}_{*}$ are given by 
$$\textrm{tr}(\textbf{J}_{*}) = -\frac{\eta u_{*}}{\kappa}+\alpha u_{*}v_{*},\,\,\textrm{det}(\textbf{J}_{*})\,=\,u_*v_*\left[(1+\alpha v_*)(1+2\alpha v_*)-\frac{\eta\alpha u_*}{\kappa}\right].$$
The Bogdanov-Takens bifurcation can be obtained by solving $\textrm{tr}(\textbf{J}_{*}) = 0$ and $\textrm{det}(\textbf{J}_{*}) = 0$ simultaneously for $\eta$ and $\alpha$ with the facts $u_*,v_*>0$ and $1+\alpha v_*=1/u_{*}$. Therefore, from these conditions, we find
$$\alpha_{BT}=\frac{\kappa v_*}{\eta},\,\,\eta_{BT}\,=\,\frac{\kappa (2-u_*)}{\alpha u_*^3}.$$
This bifurcation ensures that the matrix $\textbf{J}_{*}$ has zero as an eigenvalue with multiplicity two. Moreover, satisfying all the parametric restrictions mentioned above, the Jordan canonical form of $\textbf{J}_{*}$ at $(\alpha,\,\eta)\,=\,(\alpha_{BT},\,\eta_{BT})$ can be obtained as $\left[\begin{array}{cc}
0 & 1 \\
0 & 0 \\
\end{array}\right]$. 
\end{proof}

\section{Analysis of the nonlocal spatio-temporal model}{\label{SE2}}

In this section, we first find the Turing bifurcation conditions for the local and nonlocal models and then move towards the weakly nonlinear analysis for both models. Note that the equilibrium points of the temporal model are the homogeneous solutions for both local and nonlocal models. This invariance of homogeneous steady-state is due to the choice of the kernel function and the boundary conditions.

\subsection{Linear stability analysis}
We first assume that the homogeneous steady-state $E_{*} = (u_{*},v_{*})$ of the system (\ref{model1}) is locally asymptotically stable, i.e., $\mbox{tr}(\mathbf{J}_{*})<0$ and $\mbox{det}(\mathbf{J}_{*})>0$. Now, we perturb the homogeneous steady-state by $u=u_{*}+\epsilon \tilde{u}_{1} \exp{(\lambda t+i(k_{x}x+k_{y}y))}$ and $v=v_{*}+\epsilon \tilde{v}_{1}\exp{(\lambda t+i(k_{x}x+k_{y}y))}$, where $|\epsilon|\ll 1$. Substitution of $u$ and $v$ into the system (\ref{NLM}) and linearization leads to
\begin{equation}{\label{LFTM}}
\begin{bmatrix}
 a_{11}- (\eta/\kappa)u_{*}\widehat{\psi}_\sigma(k_{x},k_{y}) -(k_{x}^{2}+k_{y}^{2}) -\lambda & a_{12} \\
 a_{21} & a_{22}-d(k_{x}^{2}+k_{y}^{2})-\lambda
\end{bmatrix} \begin{bmatrix}
 \tilde{u}_{1}\\
 \tilde{v}_{1}
\end{bmatrix} \equiv \mathbf{M}\begin{bmatrix}
 \tilde{u}_{1}\\
 \tilde{v}_{1}
\end{bmatrix} = \begin{bmatrix}
 0\\
 0
\end{bmatrix} ,
\end{equation}
where $a_{11}=0$, $a_{12}=-(1+2\alpha v_{*})u_{*}$, $a_{21}= (1+\alpha v_{*})v_{*}$, $a_{22}=\alpha u_{*}v_{*}$, and $\widehat{\psi}_\sigma(k_{x},k_{y}) =\mbox{exp}(-\sigma^{2}(k_{x}^{2}+k_{y}^{2})/2)$ is the Fourier transform of the kernel function $\psi_{\sigma}(x,y)$ in two variables. For the non-trivial solution of the matrix equation (\ref{LFTM}), the determinant of the matrix $\mathbf{M}$ has to be equal to $0$, and it leads to the characteristic equation
\begin{equation}{\label{CE1}}
\lambda^{2}-\mathcal{T}(k)\lambda+\mathcal{D}(k) = 0,
\end{equation} where 
$\mathcal{T}(k) =  a_{11} -(\eta/\kappa)u_{*}\exp{(-\sigma^{2}k^{2}/2)} +a_{22} -(1+d)k^{2}$ and $\mathcal{D}(k) = ( a_{11}-(\eta/\kappa)u_{*}\exp{(-\sigma^{2}k^{2}/2)} -k^{2})(a_{22}-dk^{2}) -a_{12}a_{21}$ with $k^{2}=k_{x}^{2}+k_{y}^{2}$ with $k$ being the wave number.

From (\ref{CE1}), we obtain
\begin{equation}{\label{EVNL}}
\lambda_{\pm}(k)=\frac{\mathcal{T}(k) \pm \sqrt{(\mathcal{T}(k))^{2}-4\mathcal{D}(k)}}{2}.
\end{equation}
For a fixed $\sigma$, we see that $\mathcal{T}(k) <0$ holds for all $k>0$ and $d$, as $\mbox{tr}(\mathbf{J}_{*})<0$, but we can not conclude anything about the sign of $\mathcal{D}(k)$. If $\mathcal{D}(k) > 0$ holds for all $k>0$ at given values of $d$ and $\sigma$, then the homogeneous steady-state $(u_{*},v_{*})$ is stable under the heterogeneous perturbations. Violating the condition $\mathcal{D}(k) > 0$ for some $k>0$ causes an instability in the homogeneous steady-state $(u_{*},v_{*})$, called Turing instability. In this case, a Turing pattern can be observed in the system (\ref{NLM}) for such parameter values. 

So, for a fixed value of $\sigma$, we focus on finding the critical value $d=d_{T}$ and the corresponding critical value $k=k_{T} >0$ for which the Turing bifurcation occurs. At the Turing bifurcation threshold, $\mathcal{D}(k) = 0$ holds for a unique $k$, and in this case $\mathcal{D}(k)$ satisfies
\begin{equation}{\label{NLTBC}}
\mathcal{D}(k) = 0 ~\mbox{and}~\frac{\partial\mathcal{D}(k) }{\partial k}=0.
\end{equation}
From the first equation of (\ref{NLTBC}), we get
\begin{equation}{\label{TBCD}}
d=\frac{a_{22}(a_{11}-u_{*}e^{-k^{2}\sigma^{2}/2}-k^{2})- a_{12}a_{21}}{k^{2}(a_{11}-u_{*}e^{-k^{2}\sigma^{2}/2}-k^{2})}.
\end{equation}
After eliminating $d$ between the first and second equations of (\ref{NLTBC}), we obtain
\begin{align}{\label{TBCK}}
2a_{22}(a_{11}-u_{*}e^{-\sigma^{2}k^{2}/2} -k^{2})^{2}-a_{12}a_{21}\bigg{(}2a_{11}-4k^{2}-u_{*}(2-k^{2}\sigma^{2})e^{-k^{2}\sigma^{2}/2}\bigg{)}=0.
\end{align}
For a fixed $\sigma$, we find the solution $k=k_{T}$ of the equation (\ref{TBCK}) numerically, By substituting it into the equation (\ref{TBCD}), we find the critical diffusion coefficient $d_{T}$ for the Turing bifurcation. The Turing bifurcation threshold for the local model can be obtained by calculating $d=d_{T}$ with $\sigma =0$.

\subsection{Weakly Nonlinear Analysis}

Near the Turing bifurcation threshold, the dynamics of the system (\ref{NLM}) change slowly due to the small variation in the parameter values, and we can study the pattern formation for such parameters with the help of the amplitude equations. In this case, we consider three active dominant resonant pairs of eigenmodes $(\mathbf{k}_{j},-\mathbf{k}_{j})~(j=1,2,3)$ making angles of $2\pi/3$ with $|\mathbf{k}_{j}|=k_{T}$. The solution of the nonlocal model (\ref{NLM}) near the Turing bifurcation threshold $d = d_{T}$ can be written as the sum of the Fourier plane waves:
\begin{equation}{\label{SONLM}}
\begin{pmatrix}
u \\
v
\end{pmatrix} = \begin{pmatrix}
u_{*} \\
v_{*}
\end{pmatrix} + \sum_{j=1}^{3} [\textbf{A}_{j}(t) \exp (i\mathbf{k}_{j}\cdot \mathbf{r}) + \overline{\textbf{A}}_{j}(t)\exp (-i\mathbf{k}_{j}\cdot \mathbf{r})],
\end{equation}
where $\textbf{A}_{j} = (A_{j,u},A_{j,v})^{T}$ and $\overline{\textbf{A}}_{j} = (\overline{A}_{j,u},\overline{A}_{j,v})^{T}$ are the amplitudes associated with the eigenmodes $\mathbf{k}_{j}$ and $-\mathbf{k}_{j}$, respectively.

Now, we derive the amplitude equations for $\textbf{A}_{j} (j=1,2,3)$. We expand the time in terms of different time-scales with respect to a small parameter $\epsilon$ and also expand the bifurcation parameter $d$ and the other variables $u$ and $v$ as follows:
\begin{subequations}{\label{PST}}
\begin{align}
t &= t_{0} + \epsilon t_{1} + \epsilon^{2}t_{2} + \cdots,\\
d &= d_{T} + \epsilon d^{(1)} + \epsilon^{2}d^{(2)} + \cdots, \\
u &= u_{*} + \epsilon u_{1} + \epsilon^{2}u_{2} + \epsilon^{3}u_{3} + \cdots, \\
v &= v_{*} + \epsilon v_{1} + \epsilon^{2}v_{2} + \epsilon^{3}v_{3} + \cdots.
\end{align}
\end{subequations}

The amplitude $\textbf{A}_{j}$ of the spatial pattern evolves on a slow temporal scale at the initial stage. The derivative $\frac{\partial}{\partial t_{0}}$ does not have an effect on $\textbf{A}_{j}$ because it corresponds  to the fast time. Therefore, we separate the fast and slow time scales as 
\begin{equation}{\label{SFTS}}
\frac{\partial}{\partial t} =\epsilon\frac{\partial}{\partial t_{1}} + \epsilon^{2}\frac{\partial}{\partial t_{2}} + o(\epsilon^{3}). 
\end{equation}
We substitute (\ref{PST}) and (\ref{SFTS}) into (\ref{NLM}) and equate the coefficients of $\epsilon$, $\epsilon^{2}$ and $\epsilon^{3}$. Comparing the first order coefficients of $\epsilon$, we obtain
\begin{equation}{\label{COFOE}}
\mathbf{L}_{T} \begin{pmatrix}
u_{1} \\
v_{1}
\end{pmatrix} \equiv \begin{pmatrix}
f_{10} +\Delta-(\eta/\kappa)u_{*}\psi_{\sigma}\ast & f_{01} \\
g_{10} & g_{01} +d_{T}\Delta 
\end{pmatrix}\begin{pmatrix}
u_{1} \\
v_{1}
\end{pmatrix} = \begin{pmatrix}
0 \\
0
\end{pmatrix},
\end{equation}
where $f_{10}=a_{11}$, $f_{01} = a_{12}$, $g_{10}= a_{21}$, and $g_{01}=a_{22}$.

The solution of the system (\ref{COFOE}) can be written in the form 
\begin{equation}{\label{COFOS}}
\begin{pmatrix}
u_{1} \\
v_{1}
\end{pmatrix} = \begin{pmatrix}
f_{1} \\
g_{1}
\end{pmatrix} \bigg{(} \sum_{j=1}^{3}W_{j} \mbox{exp}(i\mathbf{k}_{j}\cdot \mathbf{r}) \bigg{)} + \mbox{c.c.},
\end{equation}
where 
$$f_{1} = -f_{01}/f_{c}~\mbox{and}~g_{1}=(f_{10}-(\eta/\kappa)u_{*}e^{-k_{T}^{2}\sigma^{2}/2}-k_{T}^{2})/f_{c}$$ with $f_{c}=\sqrt{(f_{10}-(\eta/\kappa)u_{*}\exp{(-k_{T}^{2}\sigma^{2}/2)}-k_{T}^{2})^2+f_{01}^2}$. Here, for given $j$, $W_{j}$ is the modulus of the first order disturbance term, and c.c. denotes the complex conjugate.

Now, comparing the second order coefficients of $\epsilon$, we get
\begin{align}{\label{COSOE}}
\mathbf{L}_{T} \begin{pmatrix}
u_{2} \\
v_{2}
\end{pmatrix} = \frac{\partial}{\partial t_{1}} & \begin{pmatrix}
u_{1} \\
v_{1}
\end{pmatrix} - \begin{pmatrix}
0 & 0 \\
0 & d^{(1)}\Delta 
\end{pmatrix} \begin{pmatrix}
u_{1} \\
v_{1}
\end{pmatrix}  \nonumber \\
&-\begin{pmatrix}
f_{20}u_{1}^{2} +f_{11}u_{1}v_{1}  + f_{02}v_{1}^{2}-(\eta/\kappa)u_{1}\psi_{\sigma}\ast u_{1} \\
g_{20}u_{1}^{2} +g_{11}u_{1}v_{1} + g_{02} v_{1}^{2} 
\end{pmatrix} = \begin{pmatrix}
F_{u}\\
F_{v}
\end{pmatrix},
\end{align}
where  $f_{20}=0$, $f_{11}=-(1+2\alpha v_{*})$, $f_{02}= -\alpha u_{*}$, $g_{20}= 0 $, $g_{11}= (1+2\alpha v_{*})$, and $g_{02}= \alpha u_{*}$.

Our next target is to solve the system (\ref{COSOE}) for $(u_{2},v_{2})^{T}$. The Fredholm solvability condition ensures the existence of a nontrivial solution of the non-homogeneous problem (\ref{COSOE}). According to that condition, the right-hand side of the equation (\ref{COSOE}) must be orthogonal to the zero eigenvectors of the operator $\mathbf{L}_{T}^{+}$ (the adjoint operator of the operator $\mathbf{L}_{T}$). Now, the zero eigenvector of the operator $\mathbf{L}_{T}$ is
\begin{equation}{\label{FSCSO}}
\begin{pmatrix}
f_{2}\\
g_{2}
\end{pmatrix} \mbox{exp}(-i\mathbf{k}_{j}\cdot \mathbf{r}) + \mbox{c.c.}, 
\end{equation}
where $$f_{2} = -g_{10}/g_{c}~\mbox{and}~ g_{2} = (f_{10}-(\eta/\kappa)u_{*}e^{-k_{T}^{2}\sigma^{2}/2} -k_{T}^{2})/g_{c},$$ with $g_{c} = \sqrt{(f_{10}-(\eta/\kappa)u_{*}\exp{(-k_{T}^{2}\sigma^{2}/2)} -k_{T}^{2})^{2}+g_{10}^{2}}$.

\noindent From the orthogonality condition, we obtain
\begin{equation}
(f_{2},g_{2})\begin{pmatrix}
F_{u}^{j}\\
F_{v}^{j}
\end{pmatrix} = 0,~~ (j=1,2,3),
\end{equation}
where $F_{u}^{j}$ and $F_{v}^{j}$ are the coefficients of $\mbox{exp}(i\mathbf{k}_{j}\cdot \mathbf{r})$ in $F_{u}$ and $F_{v}$ respectively. 

\noindent For $j=1$, we get
\begin{equation}
\begin{pmatrix}
F_{u}^{1}\\
F_{v}^{1}
\end{pmatrix} = \begin{pmatrix}
f_{1} \\
g_{1}
\end{pmatrix} \frac{\partial W_{1}}{\partial t_{1}} + \begin{pmatrix}
0 \\
g_{1}k_{T}^{2}d^{(1)}
\end{pmatrix} W_{1}- 2\begin{pmatrix}
F_{1}\\
G_{1}
\end{pmatrix}\overline{W}_{2}\overline{W}_{3},
\end{equation}
where $F_{1} = f_{20}f_{1}^{2} +f_{11}f_{1}g_{1}+ f_{02}g_{1}^{2}-(\eta/\kappa)f_{1}^{2}\exp{(-k_{T}^{2}\sigma^{2}/2)}$ and $G_{1} =g_{20}f_{1}^{2} +g_{11}f_{1}g_{1}+ g_{02}g_{1}^{2}$. 

\noindent After applying the solvability condition, we get
\begin{equation}
(f_{1}f_{2}+g_{1}g_{2})\frac{\partial W_{1}}{\partial t_{1}} =-g_{1}g_{2}k_{T}^{2}d^{(1)}W_{1}+ 2(f_{2}F_{1}+g_{2}G_{1})\overline{W}_{2}\overline{W}_{3}.
\end{equation}
Similarly, for $j=2$ and $3$, we obtain the following equations:
\begin{equation}
(f_{1}f_{2}+g_{1}g_{2})\frac{\partial W_{2}}{\partial t_{1}} = -g_{1}g_{2}k_{T}^{2}d^{(1)}W_{2}+ 2(f_{2}F_{1}+g_{2}G_{1})\overline{W}_{1}\overline{W}_{3},
\end{equation}
\begin{equation}
(f_{1}f_{2}+g_{1}g_{2})\frac{\partial W_{3}}{\partial t_{1}} =-g_{1}g_{2}k_{T}^{2}d^{(1)}W_{3}+ 2(f_{2}F_{1}+g_{2}G_{1})\overline{W}_{1}\overline{W}_{2}.
\end{equation}

The solution of the system (\ref{COSOE}) can be written as:
\begin{align}{\label{COSOS}}
\begin{pmatrix}
u_{2} \\
v_{2}
\end{pmatrix} = \begin{pmatrix}
X_{0} \\
Y_{0}
\end{pmatrix} & + \sum_{j=1}^{3}\begin{pmatrix}
X_{j} \\
Y_{j}
\end{pmatrix} \mbox{exp}(i\mathbf{k}_{j}\cdot \mathbf{r}) + \sum_{j=1}^{3}\begin{pmatrix}
X_{jj} \\
Y_{jj}
\end{pmatrix} \mbox{exp}(i2\mathbf{k}_{j}\cdot \mathbf{r}) \nonumber \\ 
&+ \begin{pmatrix}
X_{12} \\
Y_{12}
\end{pmatrix} \mbox{exp}(i(\mathbf{k}_{1}-\mathbf{k}_{2})\cdot \mathbf{r}) + \begin{pmatrix}
X_{23} \\
Y_{23}
\end{pmatrix} \mbox{exp}(i(\mathbf{k}_{2}-\mathbf{k}_{3})\cdot \mathbf{r}) \nonumber \\
&+ \begin{pmatrix}
X_{13} \\
Y_{13}
\end{pmatrix} \mbox{exp}(i(\mathbf{k}_{1}-\mathbf{k}_{3})\cdot \mathbf{r}) + \mbox{c.c.}
\end{align}
Substituting (\ref{COSOS}) and (\ref{COFOS}) into (\ref{COSOE}), and collecting the constant terms and the coefficients of $\mbox{exp}(i\mathbf{k}_{j}\cdot \mathbf{r})$, $\mbox{exp}(i2\mathbf{k}_{j}\cdot \mathbf{r})$ and $\mbox{exp}(i(\mathbf{k}_{j}-\mathbf{k}_{m})\cdot \mathbf{r})$ ($1 \leq j,m \leq 3$ and $j\neq m$), we find
\begin{subequations}{\label{UCSOS}}
\begin{align}
\begin{pmatrix}
X_{0} \\
Y_{0}
\end{pmatrix} & = -2\begin{pmatrix}
f_{10}-(\eta/\kappa)u_{*} & f_{01}\\
g_{10} & g_{01}
\end{pmatrix}^{-1} \begin{pmatrix}
F_{1}\\
G_{1}
\end{pmatrix} (|W_{1}|^{2}+|W_{2}|^{2}+|W_{3}|^{2}) \nonumber \\
&= \begin{pmatrix}
\xi_{u0}\\
\xi_{v0}
\end{pmatrix} (|W_{1}|^{2}+|W_{2}|^{2}+|W_{3}|^{2}), \\
\begin{pmatrix}
X_{j} \\
Y_{j}
\end{pmatrix} & = \begin{pmatrix}
f_{1} \\
g_{1}
\end{pmatrix}Z_{j},\\
\begin{pmatrix}
X_{jj} \\
Y_{jj}
\end{pmatrix} & = -\begin{pmatrix}
f_{10}-4k_{T}^{2}-(\eta/\kappa)u_{*}e^{-4k_{T}^{2}\sigma^{2}/2}  & f_{01} \\
g_{10} & g_{01}-4d_{T}k_{T}^{2}
\end{pmatrix}^{-1} \begin{pmatrix}
F_{1}\\
G_{1}
\end{pmatrix} W_{j}^{2}\nonumber \\
&= \begin{pmatrix}
\xi_{u1}\\
\xi_{v1}
\end{pmatrix} W_{j}^{2},\\
\begin{pmatrix}
X_{jm} \\
Y_{jm}
\end{pmatrix} & = -2\begin{pmatrix}
f_{10}-3k_{T}^{2}-(\eta/\kappa)u_{*}e^{-3k_{T}^{2}\sigma^{2}/2}  & f_{01} \\
g_{10} & g_{01}-3d_{T}k_{T}^{2}
\end{pmatrix}^{-1} \begin{pmatrix}
F_{1}\\
G_{1}
\end{pmatrix} W_{j}\overline{W}_{m} \nonumber \\
&= \begin{pmatrix}
\xi_{u2}\\
\xi_{v2}
\end{pmatrix} W_{j}\overline{W}_{m}.
\end{align}
\end{subequations}

\noindent Now, comparing the third order coefficients of $\epsilon$, we obtain
\begin{align}{\label{COTOE}}
\mathbf{L}_{T}  \begin{pmatrix}
u_{3} \\
v_{3}
\end{pmatrix} = & \begin{pmatrix}
\frac{\partial u_{2}}{\partial t_{1}}+\frac{\partial u_{1}}{\partial t_{2}}\\
\frac{\partial v_{2}}{\partial t_{1}}+\frac{\partial v_{1}}{\partial t_{2}}
\end{pmatrix} - \begin{pmatrix}
0 & 0 \\
0 & d^{(2)}\Delta 
\end{pmatrix} \begin{pmatrix}
u_{1} \\
v_{1}
\end{pmatrix} - \begin{pmatrix}
0 & 0 \\
0 & d^{(1)}\Delta 
\end{pmatrix} \begin{pmatrix}
u_{2} \\
v_{2}
\end{pmatrix}  \nonumber \\
&- \begin{pmatrix}
2f_{20}u_{1}u_{2} +f_{11}(u_{1}v_{2}+u_{2}v_{1}) + 2f_{02}v_{1}v_{2}-(\eta/\kappa)u_{1}\psi_{\sigma}\ast u_{2}-(\eta/\kappa)u_{2}\psi_{\sigma}\ast u_{1} \\
2g_{20}u_{1}u_{2} +g_{11}(u_{1}v_{2}+u_{2}v_{1})  + 2g_{02}v_{1}v_{2}  
\end{pmatrix}  \nonumber \\
&~~~  -\begin{pmatrix}
f_{30}u_{1}^{3} +f_{21}u_{1}^{2}v_{1} + f_{12}u_{1}v_{1}^{2} + f_{03}v_{1}^{3} \\
g_{30}u_{1}^{3} +g_{21}u_{1}^{2}v_{1} + g_{12}u_{1}v_{1}^{2} + g_{03} v_{1}^{3}
\end{pmatrix},
\end{align}
where $f_{30}= 0$, $f_{21}= 0$, $f_{12}= -\alpha$, $f_{03}= 0$, $g_{30}= 0$, $g_{21}= 0$, $g_{12}= \alpha$, and $g_{03}= 0$.

After applying the Fredholm solvability condition to the system (\ref{COTOE}) we obtain:
\begin{subequations}{\label{CFSCTO}}
\begin{align}
(f_{1}f_{2}+g_{1}g_{2})\bigg{(} \frac{\partial W_{1}}{\partial t_{2}} + \frac{\partial Z_{1}}{\partial t_{1}} \bigg{)} = & -g_{1}g_{2}k_{T}^{2}(d^{(1)}Z_{1}+d^{(2)}W_{1})+ 2(f_{2}F_{1}+g_{2}G_{1})(\overline{W}_{2}\overline{Z}_{3} + \overline{W}_{3}\overline{Z}_{2}) \nonumber \\
&~~+ ((f_{2}F_{2}+g_{2}G_{2})|W_{1}|^{2} + (f_{2}F_{3}+g_{2}G_{3})(|W_{2}|^{2}+|W_{3}|^{2}))W_{1}, \\
(f_{1}f_{2}+g_{1}g_{2})\bigg{(} \frac{\partial W_{2}}{\partial t_{2}} + \frac{\partial Z_{2}}{\partial t_{1}} \bigg{)} = & -g_{1}g_{2}k_{T}^{2}(d^{(1)}Z_{2}+d^{(2)}W_{2})+ 2(f_{2}F_{1}+g_{2}G_{1})(\overline{W}_{1}\overline{Z}_{3} + \overline{W}_{3}\overline{Z}_{1}) \nonumber \\
&~~+ ((f_{2}F_{2}+g_{2}G_{2})|W_{2}|^{2} + (f_{2}F_{3}+g_{2}G_{3})(|W_{1}|^{2}+|W_{3}|^{2}))W_{2}, \\
(f_{1}f_{2}+g_{1}g_{2})\bigg{(} \frac{\partial W_{3}}{\partial t_{2}} + \frac{\partial Z_{3}}{\partial t_{1}} \bigg{)} = &-g_{1}g_{2}k_{T}^{2}(d^{(1)}Z_{3}+d^{(2)}W_{3})+  2(f_{2}F_{1}+g_{2}G_{1})(\overline{W}_{1}\overline{Z}_{2} + \overline{W}_{2}\overline{Z}_{1}) \nonumber \\
&~~+ ((f_{2}F_{2}+g_{2}G_{2})|W_{3}|^{2} + (f_{2}F_{3}+g_{2}G_{3})(|W_{1}|^{2}+|W_{2}|^{2}))W_{3},
\end{align}
\end{subequations}
where 
\begin{align*}
F_{2} &= 2f_{20}f_{1}(\xi_{u0}+\xi_{u1})+f_{11}(f_{1}(\xi_{v0}+\xi_{v1})+g_{1}(\xi_{u0}+\xi_{u1}))+2f_{02}g_{1}(\xi_{v0}+\xi_{v1})\\
&~~~~~~-(\eta/\kappa)f_{1}(\xi_{u0}+\xi_{u1}e^{-4k_{T}^{2}\sigma^{2}/2})-(\eta/\kappa)f_{1}(\xi_{u0}+\xi_{u1})e^{-k_{T}^{2}\sigma^{2}/2}+3F_{4},\\
G_{2} &= 2g_{20}f_{1}(\xi_{u0}+\xi_{u1})+g_{11}(f_{1}(\xi_{v0}+\xi_{v1})+g_{1}(\xi_{u0}+\xi_{u1}))+2g_{02}g_{1}(\xi_{v0}+\xi_{v1})+3G_{4},\\
F_{3} &= 2f_{20}f_{1}(\xi_{u0}+\xi_{u2})+f_{11}(f_{1}(\xi_{v0}+\xi_{v2})+g_{1}(\xi_{u0}+\xi_{u2}))+2f_{02}g_{1}(\xi_{v0}+\xi_{v2})\\
&~~~~~~-(\eta/\kappa)f_{1}(\xi_{u0}+\xi_{u2}e^{-3k_{T}^{2}\sigma^{2}/2})-(\eta/\kappa)f_{1}(\xi_{u0}+\xi_{u2})e^{-k_{T}^{2}\sigma^{2}/2}+6F_{4},\\
G_{3} &= 2g_{20}f_{1}(\xi_{u0}+\xi_{u2})+g_{11}(f_{1}(\xi_{v0}+\xi_{v2})+g_{1}(\xi_{u0}+\xi_{u2}))+2g_{02}g_{1}(\xi_{v0}+\xi_{v2})+6G_{4},\\
F_{4}&=f_{1}^{3}f_{30}+f_{1}^{2}g_{1}f_{21}+f_{1}g_{1}^{2}f_{12}+g_{1}^{3}f_{03},\\
G_{4}&=f_{1}^{3}g_{30}+f_{1}^{2}g_{1}g_{21}+f_{1}g_{1}^{2}g_{12}+g_{1}^{3}g_{03}.
\end{align*}

From the equations (\ref{SONLM}), (\ref{PST}), (\ref{COFOS}), and (\ref{COSOS}), we find the relation between the amplitudes as
\begin{equation}{\label{AEUC}}
 \begin{pmatrix}
A_{j,u} \\
A_{j,v}
\end{pmatrix}= \epsilon \begin{pmatrix}
f_{1} \\
g_{1}
\end{pmatrix} W_{j} + \epsilon^{2} \begin{pmatrix}
f_{1} \\
g_{1}
\end{pmatrix} Z_{j} + o(\epsilon^{3}).
\end{equation}

\noindent Now, we focus on the amplitude equations corresponding to the $u$-component. For the notational simplicity, we denote $A_{j,u}$ as $A_{j}$. Therefore, from (\ref{AEUC}), we have
\begin{equation}
A_{j} = \epsilon f_{1}W_{j} + \epsilon^{2}f_{1}Z_{j} + o(\epsilon^{3}),~1 \leq j \leq 3.
\end{equation}
Then the amplitude equation with three nodes $A_{j} (j=1,2,3)$ is given by
\begin{subequations}{\label{GAE}}
\begin{align}
\tau_{0}\frac{\partial A_{1}}{\partial t} &= \mu A_{1} + h_{0}\overline{A}_{2}\overline{A}_{3}-(m_{1}|A_{1}|^{2}+m_{2}(|A_{2}|^{2}+|A_{3}|^{2}))A_{1},\\
\tau_{0}\frac{\partial A_{2}}{\partial t} &= \mu A_{2} + h_{0}\overline{A}_{1}\overline{A}_{3}-(m_{1}|A_{2}|^{2}+m_{2}(|A_{1}|^{2}+|A_{3}|^{2}))A_{2}, \\
\tau_{0}\frac{\partial A_{3}}{\partial t} &= \mu A_{3} + h_{0}\overline{A}_{1}\overline{A}_{2}-(m_{1}|A_{3}|^{2}+m_{2}(|A_{1}|^{2}+|A_{2}|^{2}))A_{3},
\end{align}
\end{subequations}
where $\mu =(d_{T}-d)/d_{T}$ is a normalized distance to onset of the Turing bifurcation threshold, $\tau_{0} = (f_{1}f_{2}+g_{1}g_{2})/d_{T}k_{T}^{2}g_{1}g_{2}$ is a typical relaxation time, $h_{0}=2(f_{2}F_{1}+g_{2}G_{1})/d_{T}k_{T}^{2}f_{1}g_{1}g_{2}$, $m_{1}=-(f_{2}F_{2}+g_{2}G_{2})/d_{T}k_{T}^{2}f_{1}^{2}g_{1}g_{2}$ and $m_{2}=-(f_{2}F_{3}+g_{2}G_{3})/d_{T}k_{T}^{2}f_{1}^{2}g_{1}g_{2}$. Note that the amplitude equations corresponding to the $v$-component are also like the equations for the $u$-component. For the case of $v$-component, the expression of the coefficients $\mu$ and $\tau_{0}$ remain unaltered and the other coefficients are give by $h_{0}=2(f_{2}F_{1}+g_{2}G_{1})/d_{T}k_{T}^{2}g_{1}^2g_{2}$, $m_{1}=-(f_{2}F_{2}+g_{2}G_{2})/d_{T}k_{T}^{2}g_{1}^{3}g_{2}$ and $m_{2}=-(f_{2}F_{3}+g_{2}G_{3})/d_{T}k_{T}^{2}g_{1}^{3}g_{2}$.

Now, each of the amplitudes in equation (\ref{GAE}) can be decomposed into the mode $\rho_{j} = |A_{j}| (j=1,2,3)$ and a corresponding phase $\phi_{j}$. Substituting $A_{j}=\rho_{j}\mbox{exp}(i\phi_{j})$ into equations of (\ref{GAE}) and separating the real and imaginary parts, we obtain the following differential equations in real variables:
\begin{subequations}{\label{GAS}}
\begin{align}
\tau_{0}\frac{\partial \Phi}{\partial t} &= -h_{0}\frac{\rho_{1}^{2}\rho_{2}^{2}+\rho_{2}^{2}\rho_{3}^{2}+\rho_{3}^{2}\rho_{1}^{2}}{\rho_{1}\rho_{2}\rho_{3}} \sin \Phi,\label{GASa}\\
\tau_{0}\frac{\partial \rho_{1}}{\partial t} &= \mu \rho_{1} + h_{0}\rho_{2}\rho_{3}\cos \Phi-m_{1}\rho_{1}^{3} -m_{2}(\rho_{2}^{2}+\rho_{3}^{2})\rho_{1}, \label{GASb}\\
\tau_{0}\frac{\partial \rho_{2}}{\partial t} &= \mu \rho_{2} + h_{0}\rho_{1}\rho_{3}\cos \Phi-m_{1}\rho_{2}^{3} -m_{2}(\rho_{1}^{2}+\rho_{3}^{2})\rho_{2}, \label{GASc}\\
\tau_{0}\frac{\partial \rho_{3}}{\partial t} &= \mu \rho_{3} + h_{0}\rho_{1}\rho_{2}\cos \Phi-m_{1}\rho_{3}^{3} -m_{2}(\rho_{1}^{2}+\rho_{2}^{2})\rho_{3},\label{GASd}
\end{align}
\end{subequations}
where $\Phi = \phi_{1}+\phi_{2}+\phi_{3}$. The system (\ref{GAS}) has always an equilibrium point with the components $\rho_{1} = \rho_{2} = \rho_{3} = 0$. If this trivial equilibrium point is stable, then following (\ref{SONLM}), we can not find any non-homogeneous stationary pattern (Turing pattern) for the nonlocal model (\ref{NLM}). Therefore, for the generation of a non-homogeneous stationary pattern, the trivial equilibrium point of the system (\ref{GAS}) has to be unstable.

Now, we find all the equilibrium points of the system (\ref{GAS}) and determine their stabilities. For non-zero amplitudes $\rho_{1}$, $\rho_{2}$, $\rho_{3}$ and $\tau_{0} > 0$, the solution corresponding to $\Phi = 0$ ($H_{0}$ pattern) is stable if $h_{0}>0$, and the solution corresponding to $\Phi = \pi$ ($H_{\pi}$ pattern) is stable if $h_{0}<0$. The mode equations for the stable solutions of the equation (\ref{GASa}) is given by
\begin{subequations}{\label{GASM}}
\begin{align}
\tau_{0}\frac{\partial \rho_{1}}{\partial t} &= \mu \rho_{1} + |h_{0}|\rho_{2}\rho_{3}-m_{1}\rho_{1}^{3} -m_{2}(\rho_{2}^{2}+\rho_{3}^{2})\rho_{1}, \label{GASMA}\\
\tau_{0}\frac{\partial \rho_{2}}{\partial t} &= \mu \rho_{2} + |h_{0}|\rho_{1}\rho_{3}-m_{1}\rho_{2}^{3} -m_{2}(\rho_{1}^{2}+\rho_{3}^{2})\rho_{2}, \label{GASMB}\\
\tau_{0}\frac{\partial \rho_{3}}{\partial t} &= \mu \rho_{3} + |h_{0}|\rho_{1}\rho_{2}-m_{1}\rho_{3}^{3} -m_{2}(\rho_{1}^{2}+\rho_{2}^{2})\rho_{3}.\label{GASMC}
\end{align}
\end{subequations}

\begin{theorem}\label{thm6}
Suppose $\tau_{0}>0$. Then system (\ref{NLM}) admits four kinds of solutions, and they are
\begin{itemize}
\item[(I)] Homogeneous solution: $\rho_{1} = \rho_{2} = \rho_{3} = 0$. It is stable for $\mu < \mu_{2} = 0$ and unstable for $\mu > \mu_{2}$.
\item[(II)] Stripe pattern:
$\rho_{1} = \sqrt{\mu/m_{1}} \neq 0,\rho_{2} = \rho_{3} = 0$. It is stable for $\mu > \mu_{3} = h_{0}^{2}m_{1}/(m_{2}-m_{1})^{2}$ and unstable for $\mu < \mu_{3}$.
\item[(III)] Hexagonal pattern ($H_{0}$ or $H_{\pi}$):
\begin{equation*}
\rho_{1} = \rho_{2} = \rho_{3} = \frac{|h_{0}|\pm \sqrt{h_{0}^{2}+4(m_{1}+2m_{2})\mu}}{2(m_{1}+2m_{2})} \equiv \rho^{\pm}.
\end{equation*}
These amplitudes exist if $\mu > \mu_{1} = -h_{0}^{2}/(4(m_{1}+2m_{2}))$. The solution $\rho^{+} $ is stable for $\mu < \mu_{4}=h_{0}^{2}(2m_{1}+m_{2})/(m_{2}-m_{1})^{2}$, and $\rho^{-}$ is always unstable.
\item[(IV)] Mixed pattern:
\begin{equation*}
\rho_{1} =\frac{|h_{0}|}{m_{2}-m_{1}},~~ \rho_{2} = \rho_{3} = \sqrt{\frac{\mu -m_{1}\rho_{1}^{2}}{m_{1}+m_{2}}},
\end{equation*}
with $m_{2}>m_{1}$, $\mu > m_{1}\rho_{1}^{2}$, and they are always unstable.
\end{itemize}
\end{theorem}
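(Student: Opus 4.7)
The plan is to locate the fixed points of the autonomous system (\ref{GASM}) by exploiting its $S_{3}$ permutation symmetry, then classify their stability by linearizing and exploiting the residual symmetry at each equilibrium to block-diagonalize the Jacobian. Write the common Jacobian as $J=\tau_{0}^{-1}M$ with diagonal entries $M_{jj}=\mu-3m_{1}\rho_{j}^{2}-m_{2}\sum_{k\neq j}\rho_{k}^{2}$ and off-diagonal entries $M_{jk}=|h_{0}|\rho_{\ell}-2m_{2}\rho_{j}\rho_{k}$ (with $\{j,k,\ell\}=\{1,2,3\}$); the stability analysis at every equilibrium reduces to evaluating this matrix and reading off signs of eigenvalues.

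First I would dispose of the trivial branch: at $\rho_{1}=\rho_{2}=\rho_{3}=0$ we have $M=\mu I$, hence a triple eigenvalue $\mu/\tau_{0}$, giving stability precisely for $\mu<0=\mu_{2}$. Next, for the stripe branch, impose $\rho_{2}=\rho_{3}=0$; equation (\ref{GASMA}) reduces to $\mu-m_{1}\rho_{1}^{2}=0$, giving $\rho_{1}=\sqrt{\mu/m_{1}}$ (existence requires $\mu/m_{1}>0$). The Jacobian becomes block diagonal with a $1\times 1$ block $-2\mu$ and a $2\times 2$ symmetric block with diagonal $\mu(m_{1}-m_{2})/m_{1}$ and off-diagonal $|h_{0}|\sqrt{\mu/m_{1}}$, whose eigenvalues are $\mu(m_{1}-m_{2})/m_{1}\pm|h_{0}|\sqrt{\mu/m_{1}}$; requiring the largest to be negative rearranges, after squaring, into $\mu>h_{0}^{2}m_{1}/(m_{2}-m_{1})^{2}=\mu_{3}$.

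For the hexagonal branch I would substitute $\rho_{1}=\rho_{2}=\rho_{3}=\rho$, which collapses all three equations to the quadratic $(m_{1}+2m_{2})\rho^{2}-|h_{0}|\rho-\mu=0$, yielding $\rho^{\pm}$ and existence for $\mu>\mu_{1}=-h_{0}^{2}/[4(m_{1}+2m_{2})]$. The Jacobian at this point has the ``circulant'' form $M=(a-b)I+b\mathbf{1}\mathbf{1}^{T}$ with $a=\mu-(3m_{1}+2m_{2})\rho^{2}$ and $b=|h_{0}|\rho-2m_{2}\rho^{2}$, so the eigenvalues are $a+2b$ (eigenvector $(1,1,1)^{T}$) and $a-b$ (double). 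Using the quadratic constraint to eliminate $\mu$, these simplify to $-2\mu-|h_{0}|\rho$ and $2(m_{2}-m_{1})\rho^{2}-2|h_{0}|\rho$. For $\rho^{-}$ one checks $-2\mu-|h_{0}|\rho^{-}>0$ in its domain of existence, so $\rho^{-}$ is always unstable; for $\rho^{+}$ the double eigenvalue $a-b$ is the binding one, and the inequality $\rho^{+}<|h_{0}|/(m_{2}-m_{1})$ becomes, after squaring and simplifying via $(3(m_{1}+m_{2}))^{2}-(m_{2}-m_{1})^{2}=4(2m_{1}+m_{2})(m_{1}+2m_{2})$, exactly $\mu<h_{0}^{2}(2m_{1}+m_{2})/(m_{2}-m_{1})^{2}=\mu_{4}$.

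Finally, for the mixed branch, set $\rho_{2}=\rho_{3}\neq\rho_{1}$; subtracting the reduced forms of (\ref{GASMA}) and (\ref{GASMB}) and factoring $\rho_{1}^{2}-\rho_{2}^{2}$ isolates the alternative $|h_{0}|/\rho_{1}+m_{1}-m_{2}=0$, giving $\rho_{1}=|h_{0}|/(m_{2}-m_{1})$ (requires $m_{2}>m_{1}$), and back-substitution yields the stated $\rho_{2}^{2}=(\mu-m_{1}\rho_{1}^{2})/(m_{1}+m_{2})$ with positivity constraint $\mu>m_{1}\rho_{1}^{2}$. The main obstacle is instability of this branch: the Jacobian is no longer circulant, but the involutive symmetry $\rho_{2}\leftrightarrow\rho_{3}$ still block-diagonalizes it into a $2\times 2$ block on the symmetric subspace (perturbations with $\delta\rho_{2}=\delta\rho_{3}$) and a $1\times 1$ block on the antisymmetric one. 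The antisymmetric eigenvalue turns out to be $[\,-|h_{0}|\rho_{1}-2m_{2}\rho_{2}^{2}+(-\mu-m_{2}\rho_{1}^{2}+(\text{terms from equilibrium identity}))\,]/\tau_{0}$; I would use the two scalar equilibrium equations to rewrite the resulting expression as a manifestly positive quantity, showing that at least one eigenvalue is positive for all admissible $(\mu,m_{1},m_{2},h_{0})$, which is the claim. This last algebraic manipulation (turning a difference into a sum of nonnegative terms) is the only genuinely delicate step; everything else is symmetry-reduced bookkeeping.
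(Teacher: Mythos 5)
Your overall route is the same as the paper's: linearize (\ref{GASM}), form the $3\times 3$ matrix $\mathbf{W}$, and read off eigenvalues case by case. (The paper only works out Cases (I) and (II) explicitly and dismisses (III) and (IV) with ``similarly''; your circulant decomposition for the hexagons and the identity $9(m_{1}+m_{2})^{2}-(m_{2}-m_{1})^{2}=4(2m_{1}+m_{2})(m_{1}+2m_{2})$ leading to $\mu_{4}$ are correct and supply detail the paper omits.) Cases (I)--(III) of your proposal check out completely.

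The one genuine gap is the last step of Case (IV). Using the equilibrium identities $\mu=m_{1}\rho_{1}^{2}+(m_{1}+m_{2})\rho_{2}^{2}$ and $|h_{0}|=(m_{2}-m_{1})\rho_{1}$, the antisymmetric eigenvalue is
\begin{equation*}
\tau_{0}^{-1}\left(\omega_{22}-\omega_{23}\right)=\frac{2(m_{2}-m_{1})(\rho_{2}^{2}-\rho_{1}^{2})}{\tau_{0}},
\end{equation*}
which is \emph{not} manifestly positive: it is negative whenever $\rho_{1}>\rho_{2}$, so no amount of algebraic rewriting will turn it into a sum of nonnegative terms. Instability still holds, but you must bring in the symmetric $2\times 2$ block, whose determinant (after the same substitutions) is
\begin{equation*}
\frac{2(m_{1}+m_{2})(m_{2}-m_{1})\,\rho_{2}^{2}(\rho_{2}^{2}-\rho_{1}^{2})}{\tau_{0}^{2}}.
\end{equation*}
If $\rho_{2}^{2}>\rho_{1}^{2}$ the antisymmetric eigenvalue is positive; if $\rho_{2}^{2}<\rho_{1}^{2}$ this determinant is negative, so the symmetric block has a positive eigenvalue; either way the mixed state is unstable. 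With that two-case argument substituted for your ``manifestly positive quantity'' claim, the proof is complete.
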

\begin{proof}
Let $(\delta \rho_{1}, \delta \rho_{2}, \delta \rho_{3})$ be a perturbation of $(\rho_{1}, \rho_{2}, \rho_{3})$. Putting these perturbations in the system (\ref{GASM}) and ignoring the second and higher order terms, we obtain the following matrix equation
\begin{equation}
    \tau_{0}\frac{d}{dt} \begin{bmatrix}
     \delta \rho_{1} \\
     \delta \rho_{2} \\
     \delta \rho_{3}
    \end{bmatrix} = \begin{bmatrix}
     \omega_{11} & \omega_{12} & \omega_{13} \\
     \omega_{21} & \omega_{22} & \omega_{23} \\
     \omega_{31} & \omega_{32} & \omega_{33} 
    \end{bmatrix} \begin{bmatrix}
     \delta \rho_{1} \\
     \delta \rho_{2} \\
     \delta \rho_{3}
    \end{bmatrix}\equiv \mathbf{W}\begin{bmatrix}
     \delta \rho_{1} \\
     \delta \rho_{2} \\
     \delta \rho_{3}
    \end{bmatrix},
\end{equation}
where $\omega_{11} = \mu -3m_{1}\rho_{1}^{2}-m_{2}(\rho_{2}^{2}+\rho_{3}^{2})$, $\omega_{12} = |h_{0}|\rho_{3}-2m_{2}\rho_{1}\rho_{2}$, $\omega_{13} = |h_{0}|\rho_{2}-2m_{2}\rho_{1}\rho_{3}$, $\omega_{21} = |h_{0}|\rho_{3}-2m_{2}\rho_{1}\rho_{2}$, $\omega_{22} = \mu -3m_{1}\rho_{2}^{2}-m_{2}(\rho_{1}^{2}+\rho_{3}^{2})$, $\omega_{23} = |h_{0}|\rho_{1}-2m_{2}\rho_{2}\rho_{3}$, $\omega_{31} = |h_{0}|\rho_{2}-2m_{2}\rho_{1}\rho_{3}$, $\omega_{32} = |h_{0}|\rho_{1}-2m_{2}\rho_{2}\rho_{3}$ and $\omega_{33} = \mu -3m_{1}\rho_{3}^{2}-m_{2}(\rho_{1}^{2}+\rho_{2}^{2})$.

\noindent \textbf{Case (I):} If $\rho_{1} = \rho_{2} = \rho_{3} = 0$, then $\mathbf{W}$ becomes
\begin{equation*}
    \mathbf{W} = \begin{bmatrix}
     \mu & 0 & 0\\
     0 & \mu & 0\\
     0 & 0 & \mu
    \end{bmatrix}
\end{equation*}
Therefore, the homogeneous solution is stable if $\mu < \mu_{2} = 0$ and unstable for $\mu > \mu_{2}$.\\
\textbf{Case (II):} If $\rho_{1} = \sqrt{\mu/m_{1}} \neq 0,\rho_{2} = \rho_{3} = 0$, then  the eigenvalues of the matrix $\mathbf{W}$ are $$-2\mu, \mu \bigg{(}1-\frac{m_{2}}{m_{1}}\bigg{)}\pm |h_{0}|\sqrt{\frac{\mu}{m_{1}}}.$$
Therefore, the stripe pattern is stable if $m_{2}>m_{1}>0$ and $\mu > \mu_{3} = h_{0}^{2}m_{1}/(m_{2}-m_{1})^{2}$. Similarly, we can prove the rest of the cases.
\end{proof}

\section{Numerical Results}{\label{SE3}}

This section validates all the obtained theoretical results through numerical computations. First, we present some bifurcations for the temporal model, then the spatio-temporal pattern for the local model, followed by the patterns for the nonlocal model. For the numerical simulations of the local and nonlocal models, we have used the SHARCNET (www.sharcnet.ca) high performance computational facilities to minimize the time in computations. 

\begin{figure}[H]
        \centering
        \includegraphics[width=10cm]{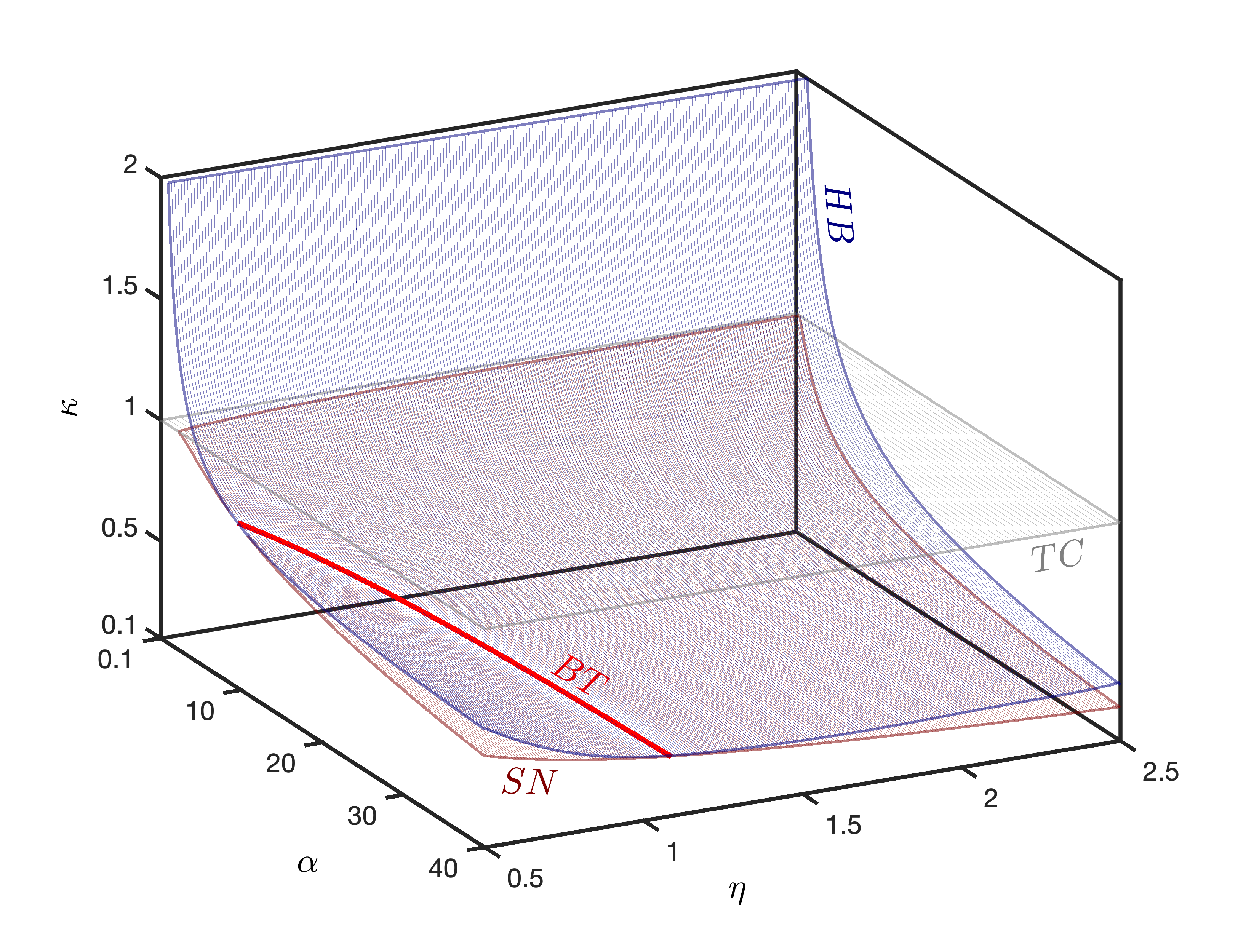}
\caption{(Color online) Three parametric bifurcation diagram for the temporal model (\ref{model1}). Bifurcation surfaces are transcritical (TC), saddle-node (SN), and Hopf (HB). The intersection of two surfaces SN and HB is the Bogdanov-Takens (BT) bifurcation.}\label{fig:B3d}
\end{figure}

\subsection{Temporal bifurcations}

Figure \ref{fig:B3d} represents a bifurcation diagram for the temporal model in three-dimensional parameter space. We find three local bifurcation surfaces, namely saddle-node (SN), transcritical (TC) and Hopf (HB) bifurcation surfaces. Among them, the transcritical bifurcation surface is a plane $\kappa = 1$. The saddle-node bifurcation surface lies below the transcritical bifurcation plane. The Hopf bifurcation surface lies both above and below the plane $\kappa=1$. The Hopf bifurcation and saddle-node bifurcation surfaces touch each other along the Bogdanov-Takens (BT) bifurcation curve (marked in red). There is no coexistence equilibrium point for parameter values lying below the SN surface. We find two coexistence equilibrium points for parameter values lying between SN and TC surfaces. One of the coexistence equilibrium points disappears through a transcritical bifurcation as $\kappa$ crosses the value 1 from below.  

SN, TC, and HB are the global bifurcations for the temporal model; however, our primary goal is to match the analytical pattern selections with the numerical non-homogeneous stationary patterns in the Turing domain. Here, HB plays an essential role as for the existence of Turing bifurcation, the non-trivial equilibrium point of the temporal model has to be Hopf-stable. So, we choose the parameter values so that the non-trivial equilibrium point of the temporal model satisfies the necessary condition for the Turing bifurcation. We discuss all the parameter values in the next section.

\subsection{Patterns for the local model}

In this subsection, we report extensive numerical simulations for the local and nonlocal models and present some qualitative results. We use periodic boundary conditions for both the models (local and nonlocal) in a square domain $[-25, 25]\times [-25,25]$ with space steps $dx = dy = 0.25$ and a time step $dt=0.01$. We have verified the numerical artifact of the patterns by choosing smaller space and time discretization steps for the simulations, and the presented patterns are unaltered. For finding the numerical solution of the local model, we have used Euler's method for the time derivative and a second-order finite-difference scheme to approximate the diffusion terms. Heterogeneous perturbations around the homogeneous steady-state have been used for the initial conditions for the species $u$ and $v$. We run the simulations in each of the upcoming patterns until the solution characteristics stop changing over time. We choose $\kappa =0.65$ and $\alpha = 10$ to be fixed parameters (unless stated otherwise) and $\eta$ as a bifurcation parameter. 

We first analyze the pattern formation for the local model. Taking the fixed parameter values, as mentioned above, we choose the bifurcation parameter value $\eta=0.92$. For this specific choice of $\eta$, the temporal system has two coexisting equilibrium points: one is a saddle point and the other is locally asymptotically stable. As mentioned earlier, we choose the locally asymptotically stable equilibrium point for finding the Turing bifurcation. For the local model, i.e., $\sigma =0$, we solve the equation (\ref{TBCK}) for $k$, and we find the critical wave number as $k_{T}=0.871$ and the corresponding Turing bifurcation threshold as $d_{T}=0.2715$. For the local model, the region $d<d_{T}$ is the Turing domain, where the stationary Turing pattern exists, and in the other region $d>d_{T}$, the homogeneous solution remains stable under the heterogeneous perturbation. 

\begin{figure}[H]
        \centering
        \includegraphics[width=10cm]{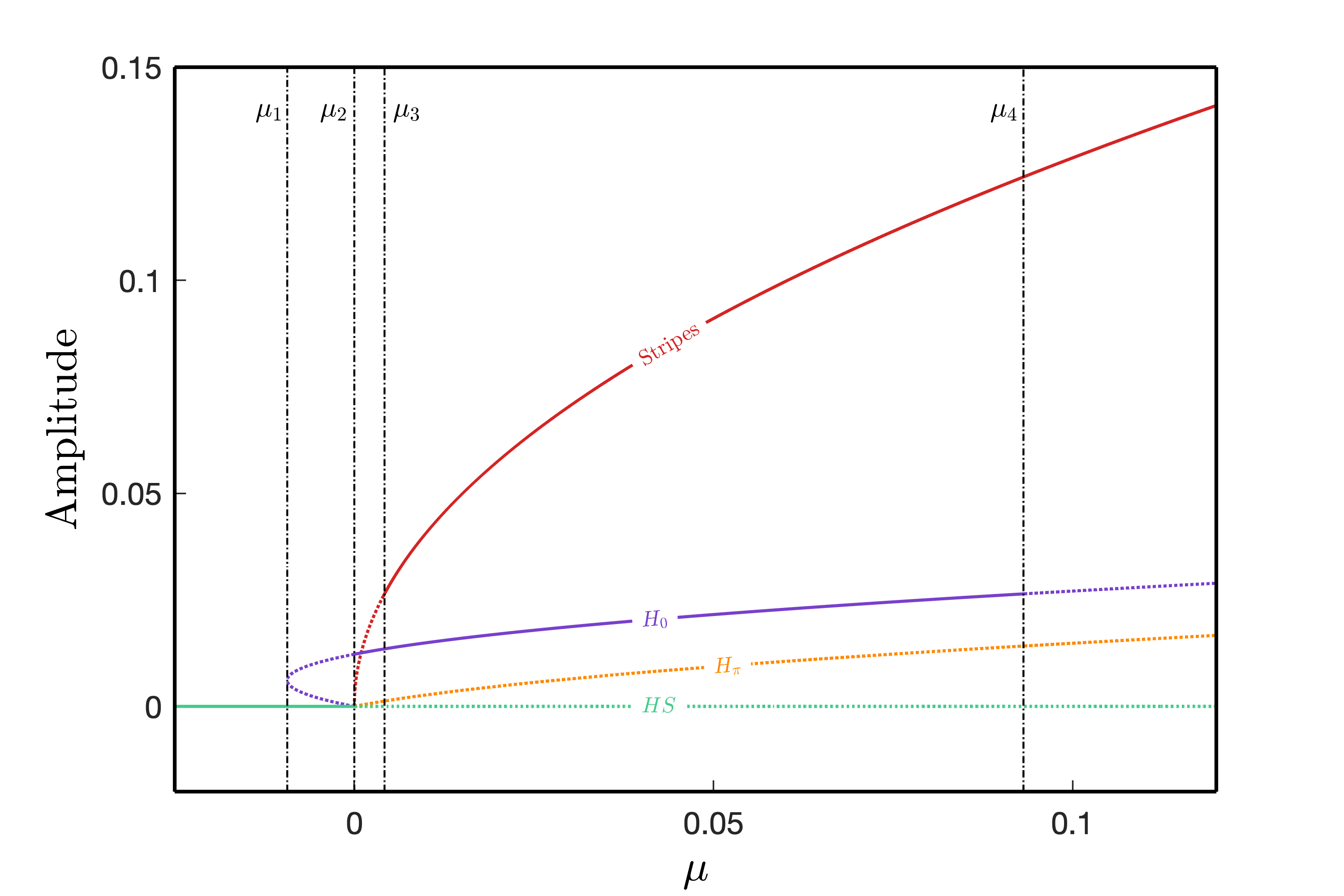}
\caption{(Color online) Bifurcation diagram of the patterns obtained through weakly nonlinear analysis for the local model with the parameter values $\kappa =0.65$, $\alpha = 10$ and $\eta = 0.92$. $HS$: homogeneous solution; $H_{0}$: hexagonal patterns with $\Phi=0$; $H_{\pi}$: hexagonal patterns with $\Phi=\pi$; solid curve: stable state; dashed line: unstable state.}\label{fig:mu_local}
\end{figure}

Now, for $\eta=0.92$, we find all the parameter values associated with the weakly nonlinear analysis and the values rounded up to three decimal places are $m_{1}=6.035$, $m_{2}=121.566$, $h_{0}=3.050$, $\tau_{0}=3.536$, $\mu_{1}=-0.009$, $\mu_{2}=0$ $\mu_{3}=0.004$, $\mu_{4}=0.093$. Figure \ref{fig:mu_local} depicts the stability of the stationary patterns for the local model obtained through the weakly nonlinear analysis. We find the diffusion coefficients $d_{j}$ ($j=1,2,3$ and $4$) corresponding to each $\mu_{j}$ by using the relation $d_{j} = (1-\mu_{j})d_{T}$ and the values are $d_{1} = 0.274$, $d_{2} = d_{T}$, $d_{3}=0.2704$, and $d_{4}=0.2462$. Next, we discuss all the results based on $d_{j}$. From Fig. \ref{fig:mu_local}, we can see that the homogeneous solution ($HS$) is stable for $d > d_{2}$. On the other hand, for $d_{3} < d < d_{2}$, two types of hexagonal patterns ($H_{0}$ and $H_{\pi}$) may occur in the system. Since, $\tau_{0}$ and $h_{0}$ both are positive, the hexagonal pattern $H_{0}$ (hot-spot) is stable, and the other one $H_{\pi}$ (cold-spot) is unstable (by Theorem \ref{thm6}). 


We find a hexagonal Turing pattern ($H_{0}$) for $d=0.271$ near the Turing bifurcation threshold in the Turing domain [see Fig. \ref{fig:pl1}(\subref{fig:pl1a})]. Here, the diffusion parameter $d$ lies in the bistable region $(d_{3},d_{2})$, hence the hexagonal pattern ($H_{0}$) is stable, and the other patterns are unstable, as predicted through weakly nonlinear analysis. Figure \ref{fig:pl1}(\subref{fig:pl1a}) depicts the stationary Turing pattern for the prey population. In this case, we observe a cold-spot pattern ($H_{\pi}$) corresponding to the predator populations (excluded in this paper). The relation of the patterns depends on the sign of $f_{1}$ and $g_{1}$, obtained through the weakly nonlinear analysis. If both of them are of the same sign, then the sign of $h_{0}$ remains the same for both components, and hence by Theorem \ref{thm6} both populations follow the same hexagonal patterns. Otherwise, the populations follow the opposite patterns because of the opposite signs in $h_{0}$. In our considered parameter values, $f_{1}$ is positive, and $g_{1}$ is negative [see Table \ref{table:WNAPV}], and hence the weakly nonlinear analysis is consistent with the pattern obtained through numerical simulation. The same type of negative correlation has been observed for all the upcoming stationary patterns. Moreover, we can find the amplitude equation corresponding to the predator population and it also predicts the same trend as discussed here.

\begin{figure}[H]
\begin{center}
        \begin{subfigure}[p]{0.32\textwidth}
                \centering
                \includegraphics[width=\textwidth]{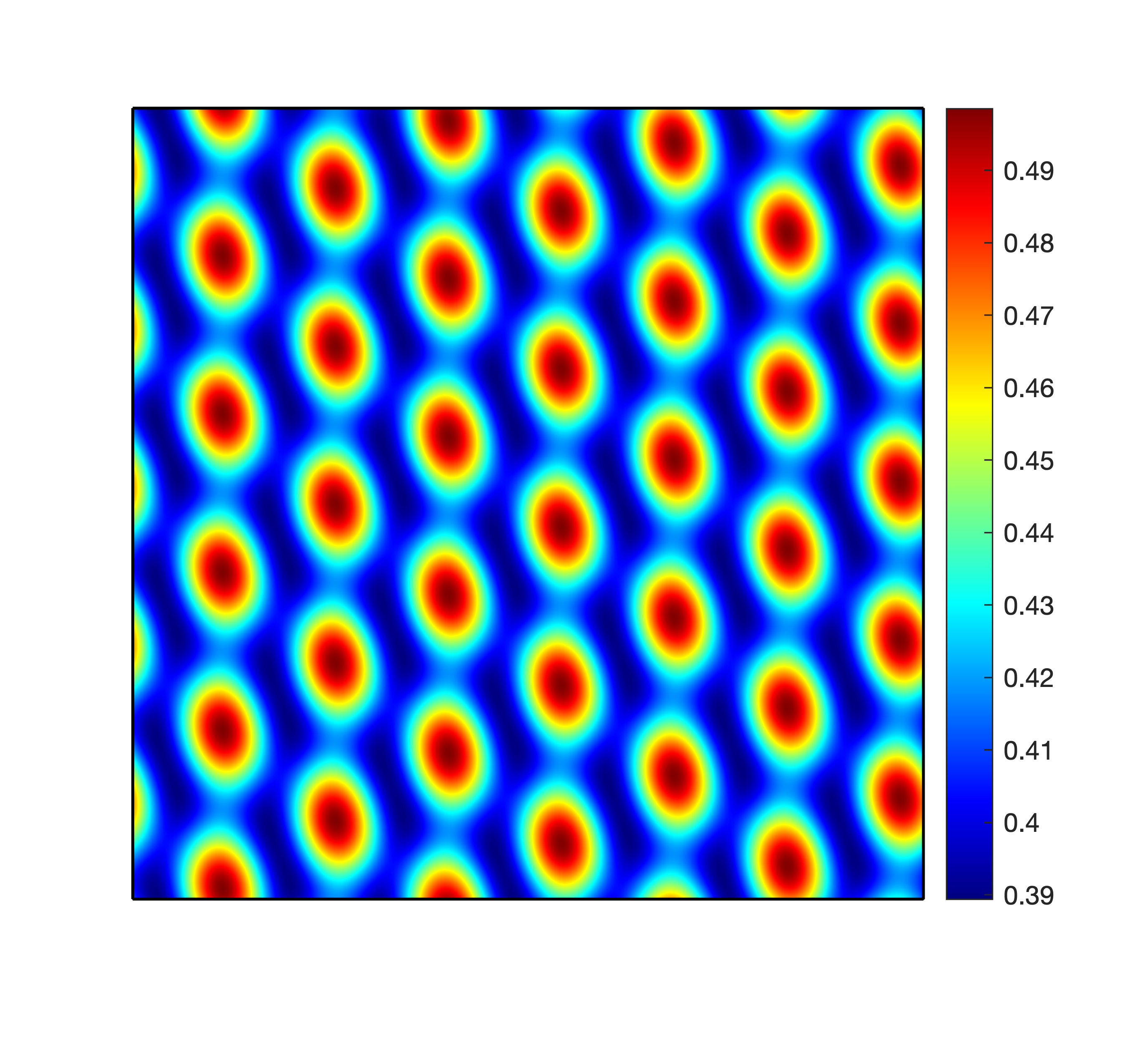}
                \caption{ }\label{fig:pl1a}
        \end{subfigure}%
        \begin{subfigure}[p]{0.32\textwidth}
                \centering
                \includegraphics[width=\textwidth]{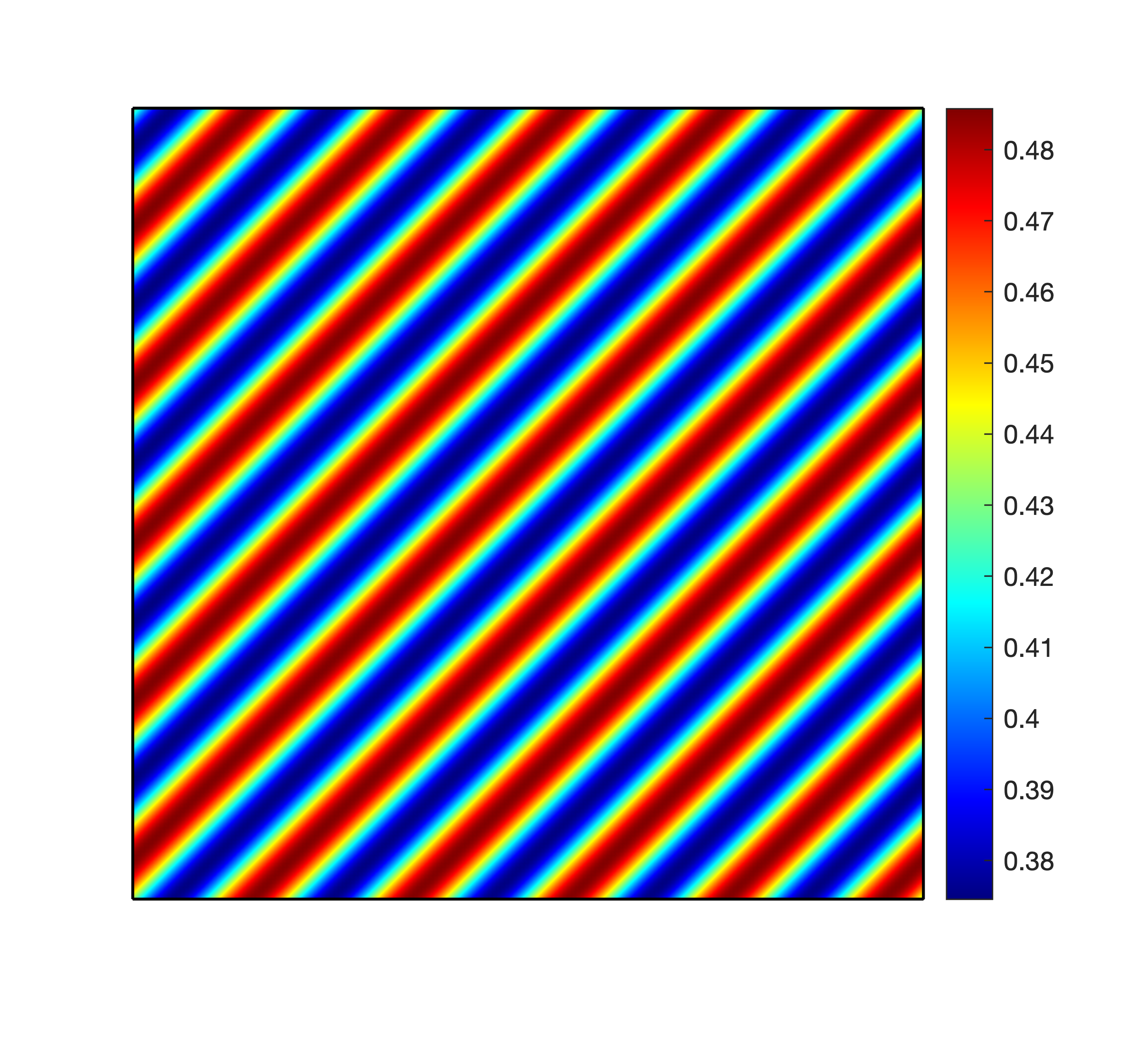}
                \caption{ }\label{fig:pl1b}
        \end{subfigure}%
        \begin{subfigure}[p]{0.32\textwidth}
                \centering
                \includegraphics[width=\textwidth]{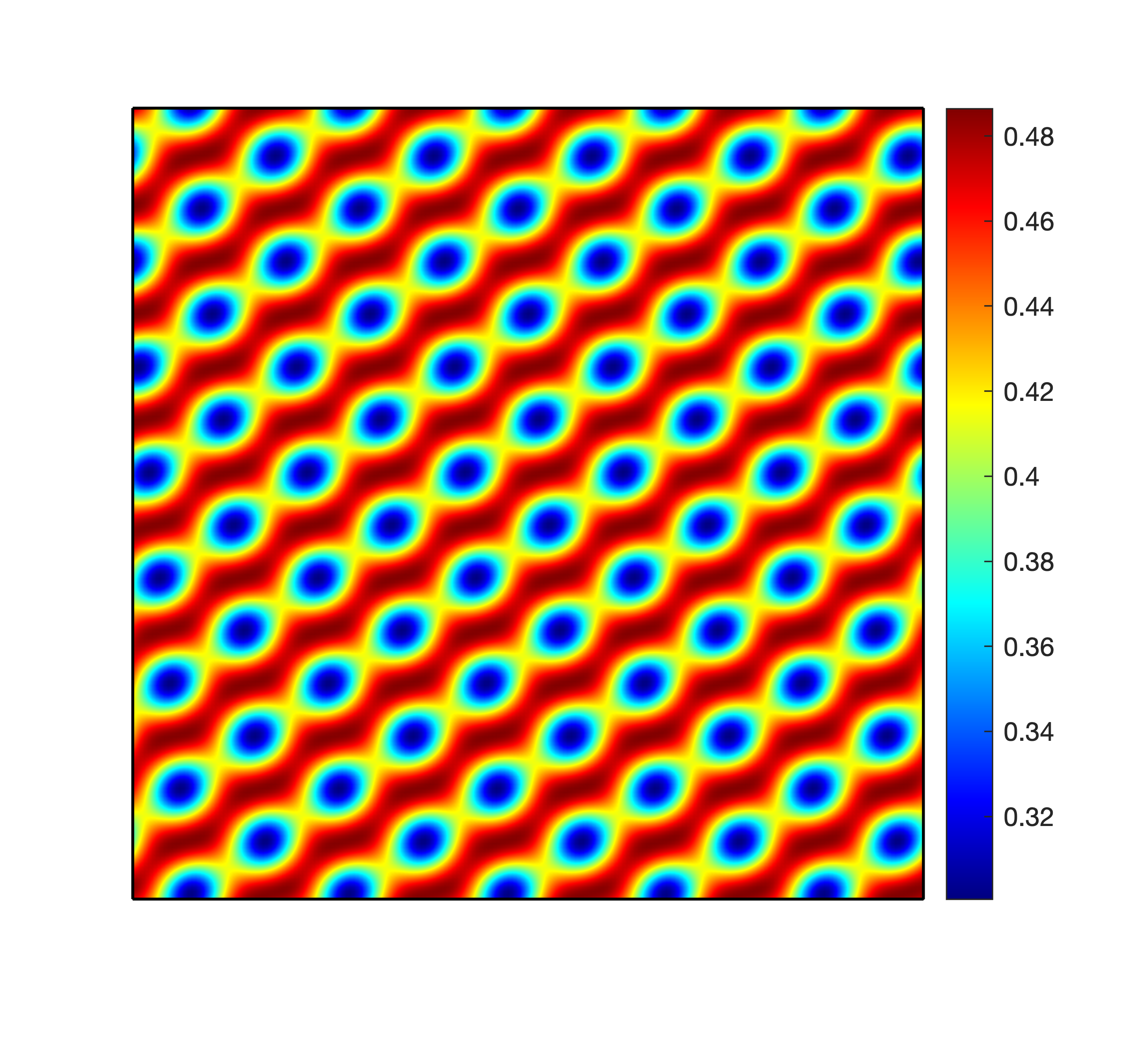}
                \caption{ }\label{fig:pl1c}
        \end{subfigure}%
\end{center}
\caption{(Color online) Turing patterns for the local model with $\eta = 0.92$, $\kappa =0.65$ and $\alpha = 10$: (a) $d =0.271$, (b) $d = 0.25$, and (c) $d=0.2$. In all the plots, $x$-axis and $y$-axis are  horizontal and vertical, respectively. }\label{fig:pl1}
\end{figure}

According to Theorem \ref{thm6}, the hexagonal pattern $H_{0}$ is stable for $d_{4}<d<d_{2}$ and the stripe pattern is stable for $d<d_{3}$ [see Fig. \ref{fig:mu_local}]. Therefore, the system has a bistable region $d_{4} < d < d_{3}$, where both patterns (hot-spot and stripe) are stable. We choose the diffusion parameter $d = 0.26$ in the bistable region $(d_{4},d_{3})$, and the corresponding stripe pattern is shown in Fig. \ref{fig:pl1}(\subref{fig:pl1b}). We will find the hot-spot pattern for different initial conditions.

Theorem \ref{thm6} predicts a stable stripe pattern for $d<d_{3}$. In the meantime, Fig. \ref{fig:pl1}(\subref{fig:pl1c}) depicts a cold-spot ($H_{\pi}$) pattern for $d=0.2$ in the prey population. This is called a reentry of a hexagonal pattern, and it occurs for the parameter values far from a Turing bifurcation threshold. In this case, some other primary slave modes become active, along with the critical wavenumber obtained through the linear stability analysis. We cannot neglect them in the derivation of the amplitude equations; rather, we should consider them in the amplitude equations \cite{ouyang2000}, and it would be an interesting extension of this work in the future. Combining all the results, with a decrease in the diffusion parameter value $d$, the number of stationary patches in a  spatial domain of fixed size increases.

\begin{table}[h!]
\caption{Computed parameter values of the local and nonlocal models for $\eta = 0.92$ corresponding to the prey population.}
\label{table:WNAPV}
\begin{center}
\begin{tabular}{|c|c|c|c|c|c|c|c|c|c|c|c|}
\hline
$\sigma$ & $d_{T}$ & $k_{T}$ & $f_{1}$  & $g_{1}$ & $m_{1}$ & $m_{2}$ & $h_{0}$ & $\tau_{0}$ &  $d_{3}$ & $d_{4}$ \\
\hline
0.00 & 0.2715 & 0.871 & 0.757 & -0.654 & 6.035 & 121.566 & 3.050 & 3.536 & 0.2704 & 0.2462 \\ 
0.25 & 0.2665 & 0.879 & 0.757 & -0.654 & 7.892 & 123.659 & 2.981 & 3.536 & 0.2651 & 0.2419 \\ 
0.50 & 0.2521 & 0.906 & 0.756 & -0.654 & 14.398 & 130.878 & 2.712 & 3.529 & 0.2502 & 0.2303 \\ 
0.75 & 0.2310 & 0.953 & 0.753 & -0.657 & 27.485 & 144.896 & 2.099 & 3.498 & 0.2289 & 0.2162 \\ 
1.00 & 0.2076 & 1.028 & 0.745 & -0.667 & 49.105 & 166.414 & 0.927 & 3.410 & 0.2070 & 0.2042 \\ 
1.25 & 0.1881 & 1.122 & 0.728 & -0.686 & 74.819 & 189.901 & -0.670 & 3.260 & 0.1876 & 0.1860 \\ 
1.50 & 0.1758 & 1.208 & 0.707 & -0.707 & 95.247 & 208.345 & -2.071 & 3.110 & 0.1702 & 0.1523 \\ 
\hline
\end{tabular}
\end{center}
\end{table}

\subsection{Patterns for the nonlocal model}

In this subsection, we focus on the numerical results of the analytical findings and the stationary Turing patterns for the nonlocal model. We have used the trapezoidal rule to find the numerical value of the convolution term. Figure \ref{fig:THNL} depicts the Turing bifurcation curves for the nonlocal model for different values of $\sigma$. With an increase in the parameter value of $\sigma$, the Turing bifurcation curve shifts downwards, however, it converges to a curve inside the considered domain in Fig. \ref{fig:THNL}. Theoretically, one can find the asymptotic curve of this Turing bifurcation curve (cf. \cite{pal2019}), but here we restrict the pattern formation for $\sigma \leq 1.5$. A detailed study of the pattern formation of the considered nonlocal model (\ref{NLM}) for higher values of $\sigma$ is one of the important issues for future work. 

\begin{figure}[H]
        \centering
        \includegraphics[width=8cm]{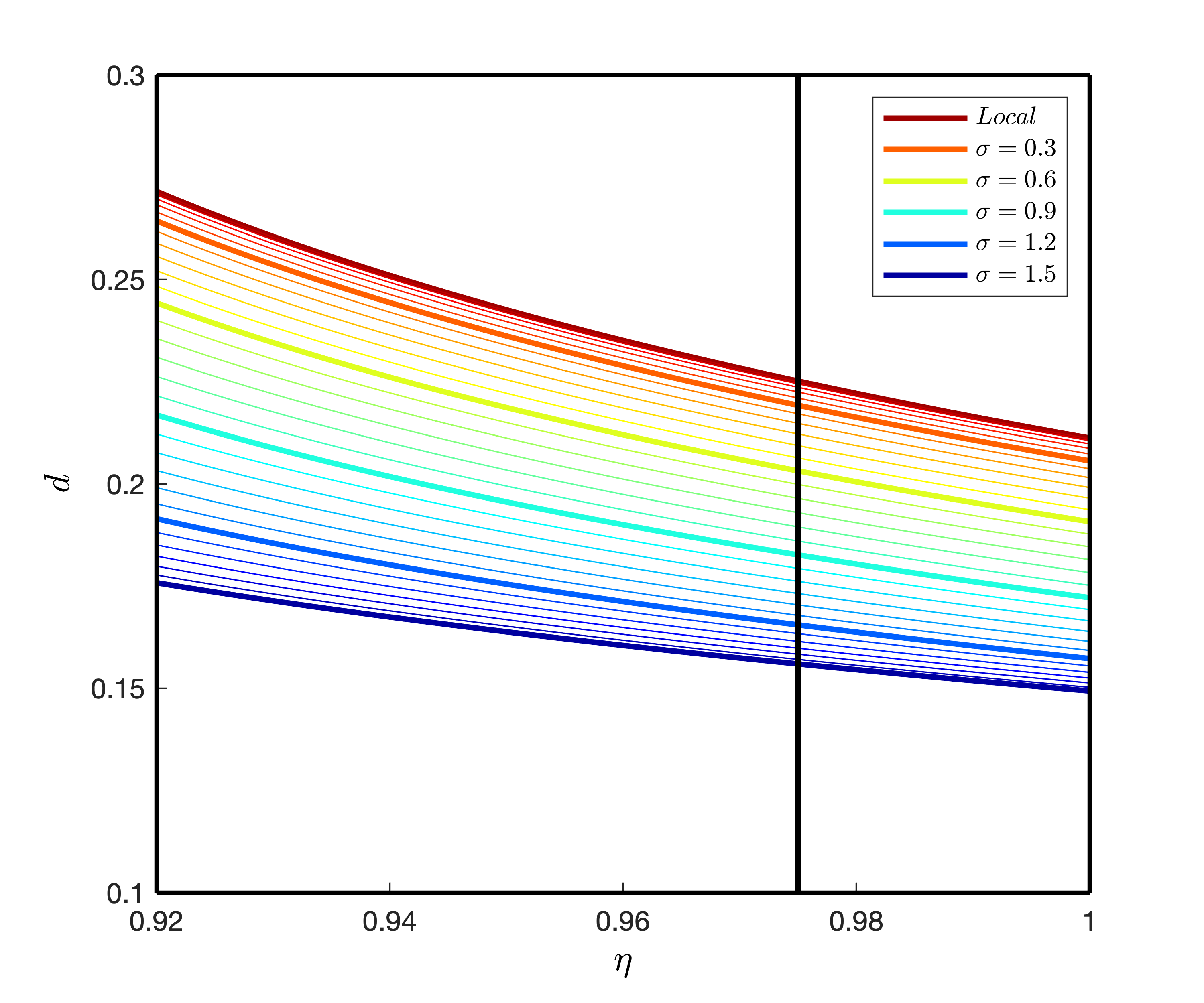}
\caption{(Color online) Turing and temporal-Hopf bifurcation curves for the local and nonlocal models.}\label{fig:THNL}
\end{figure}

\begin{figure}[H]
\begin{center}
        \begin{subfigure}[p]{0.32\textwidth}
                \centering
                \includegraphics[width=\textwidth]{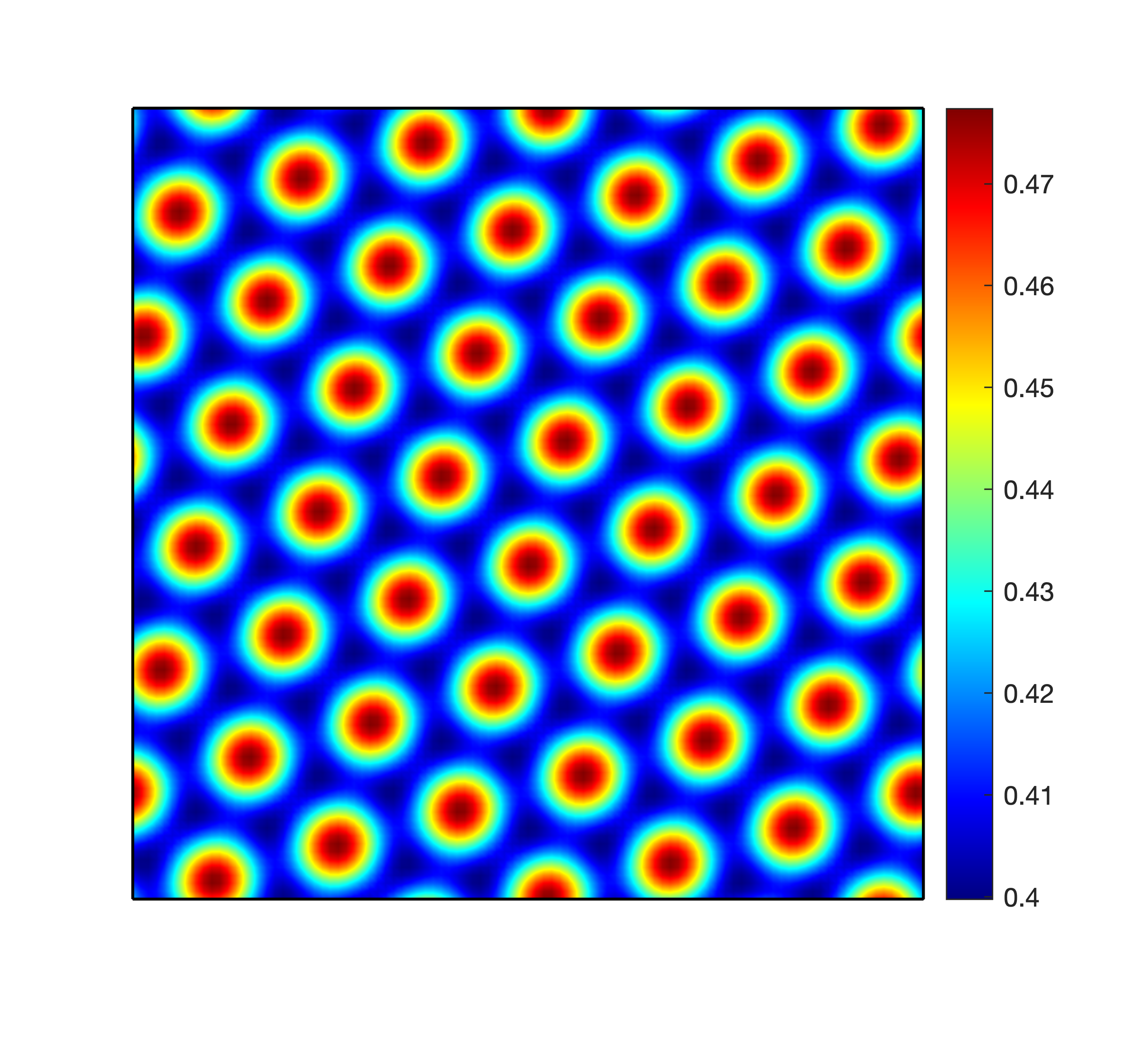}
                \caption{ }\label{fig:TP2a}
        \end{subfigure}%
        \begin{subfigure}[p]{0.32\textwidth}
                \centering
                \includegraphics[width=\textwidth]{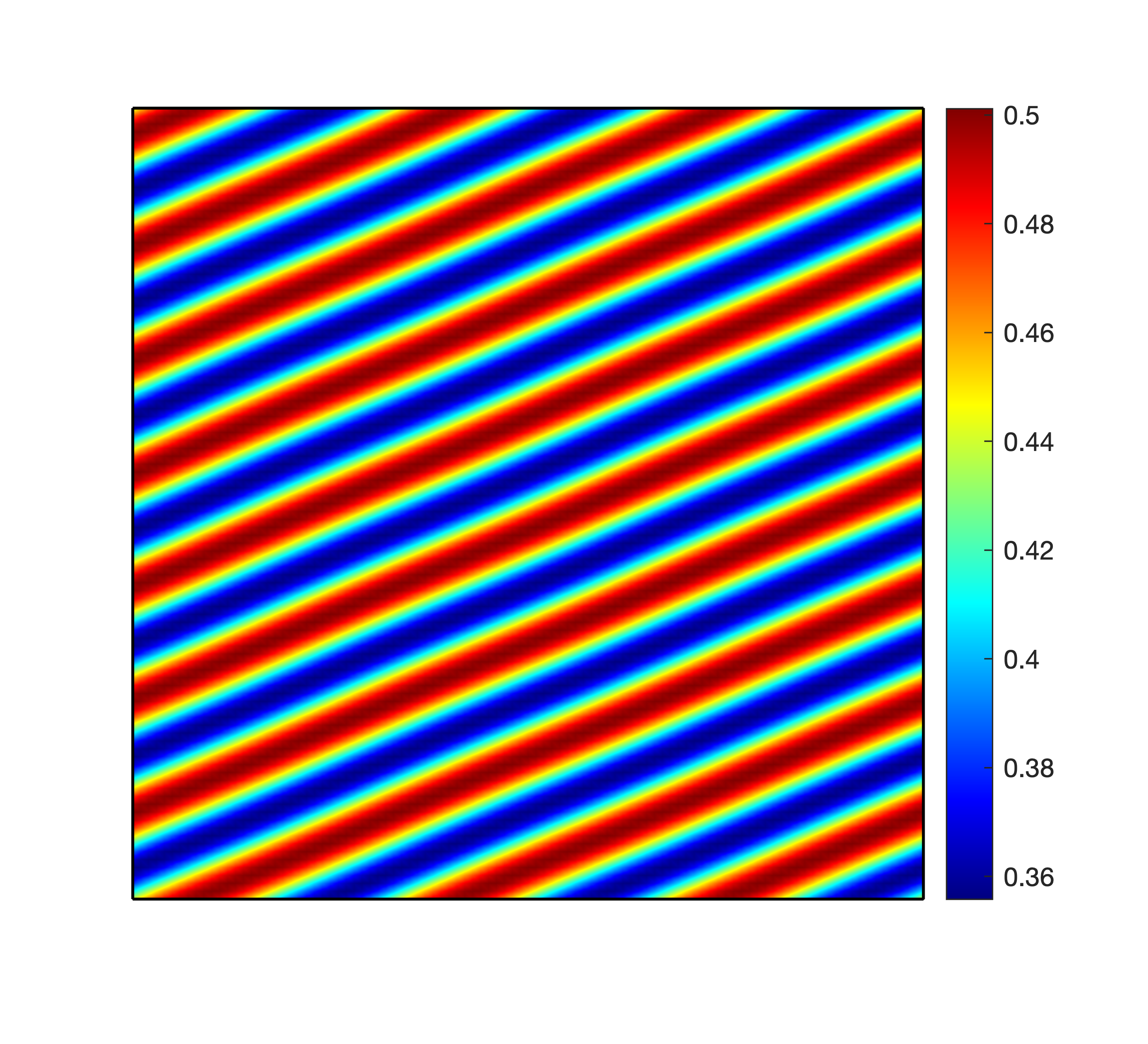}
                \caption{ }\label{fig:TP2b}
        \end{subfigure}%
        \begin{subfigure}[p]{0.32\textwidth}
                \centering
                \includegraphics[width=\textwidth]{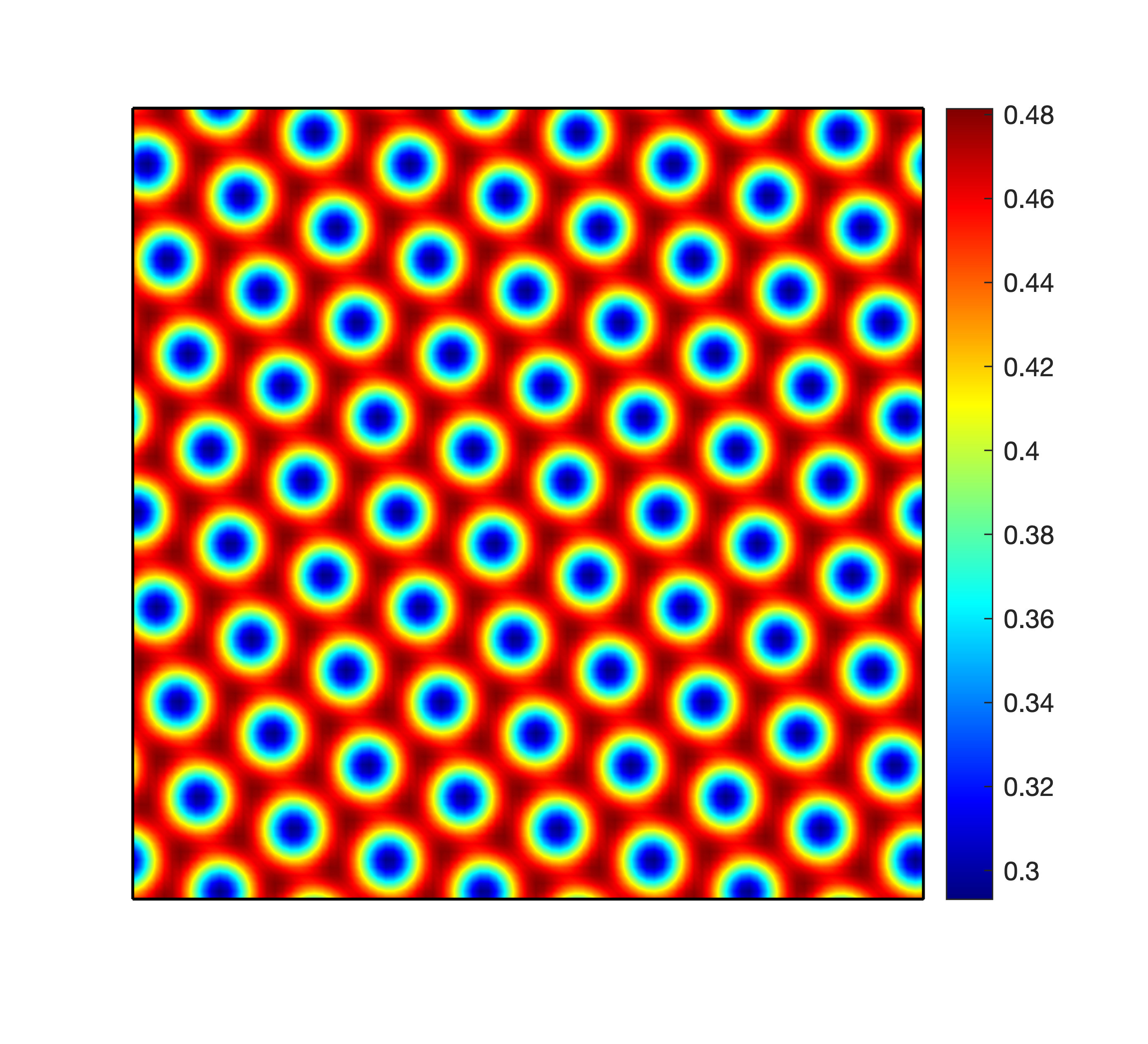}
                \caption{ }\label{fig:TP2c}
        \end{subfigure}%
\end{center}
\caption{(Color online) Turing patterns for the nonlocal model with $\eta = 0.92$, $\kappa =0.65$, $\alpha = 10$ and $\sigma = 0.5$: (a) $d =0.251$, (b) $d = 0.22$ and (c) $d=0.19$. In all the plots, $x$-axis and $y$-axis are horizontal and vertical, respectively.}\label{fig:TP2}
\end{figure}

We fix $\eta =0.92$ and $\sigma = 0.5$. For this choice of the parameter values, the Turing bifurcation threshold becomes $d_{T}=0.2521$ and the critical wavenumber $k_{T}=0.906$ [see Table \ref{table:WNAPV}]. Some of the computational parameter values for the weakly nonlinear analysis are listed in Table \ref{table:WNAPV}. Following Theorem \ref{thm6}, the cold spot pattern is stable for $d_{4}<d<d_{2}(=d_{T})$, the stripe pattern is stable for $d < d_{3}$. Numerical solutions of the nonlocal model for three different diffusion parameter values are plotted in Fig. 2, and they show a similar trend to the one obtained with the local model. However, the amplitudes of each of the patterns are different from the local model. Due to a similar reason discussed earlier, we can not predict the reentry of the hexagonal pattern (hot-spot) through the weakly nonlinear analysis described in this article.


\begin{figure}[H]
\begin{center}
        \begin{subfigure}[p]{0.32\textwidth}
                \centering
                \includegraphics[width=\textwidth]{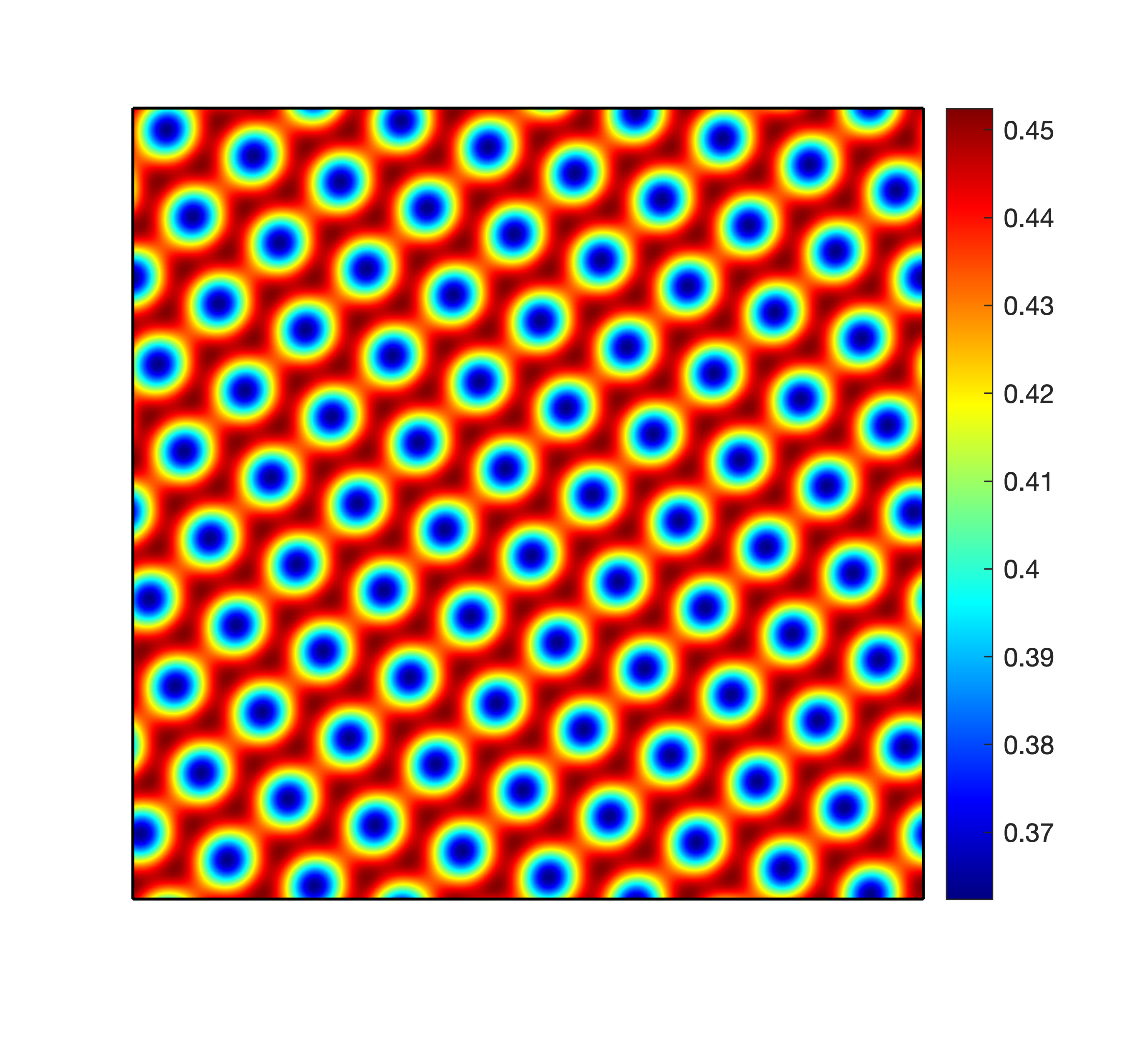}
                \caption{ }\label{fig:pl2a}
        \end{subfigure}%
        \begin{subfigure}[p]{0.32\textwidth}
                \centering
                \includegraphics[width=\textwidth]{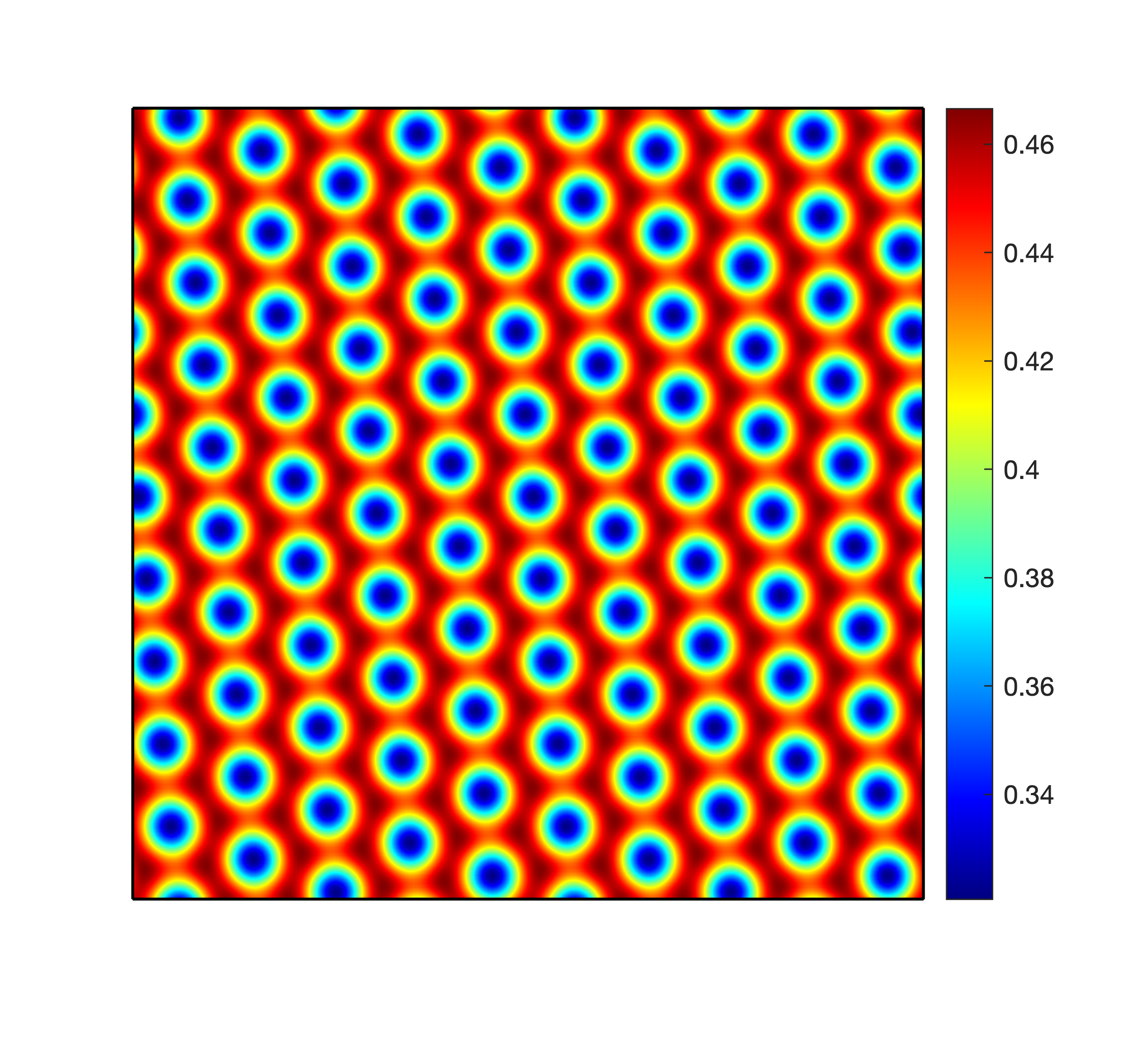}
                \caption{ }\label{fig:pl2b}
        \end{subfigure}%
        \begin{subfigure}[p]{0.32\textwidth}
                \centering
                \includegraphics[width=\textwidth]{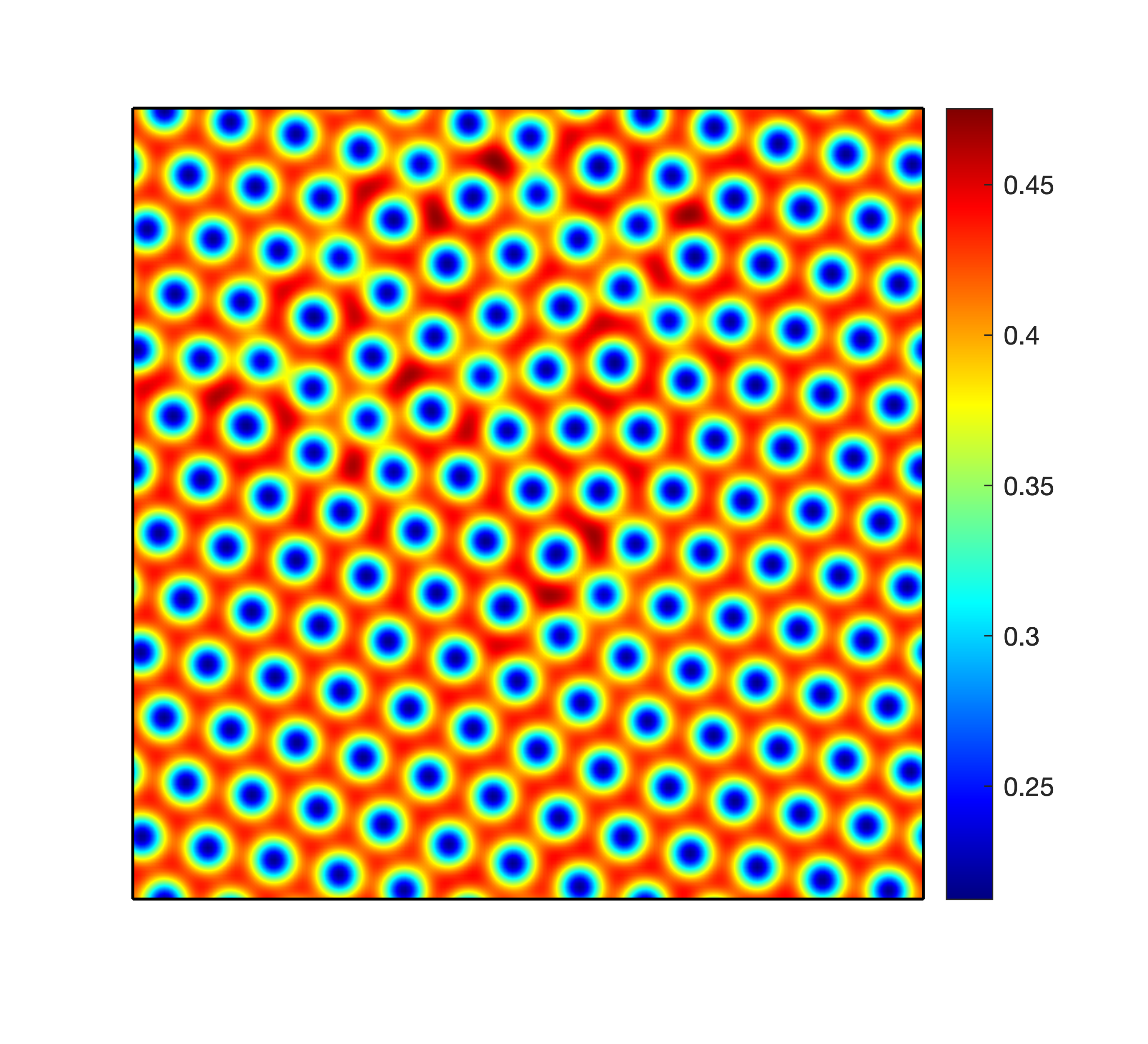}
                \caption{ }\label{fig:pl2c}
        \end{subfigure}%
\end{center}
\caption{(Color online) Turing patterns for the nonlocal model with $\eta = 0.92$, $\kappa =0.65$, $\alpha = 10$ and $\sigma = 1.5$: (a) $d =0.172$, (b) $d = 0.16$, and (c) $d=0.1$. In all the plots, $x$-axis and $y$-axis are horizontal and vertical, respectively. }\label{fig:pl2}
\end{figure}

Further, if we increase the parameter values of $\sigma$ in the mentioned range with fixing $\eta = 0.92$, the parameter $h_{0}$ changes its sign from positive to negative [see Table \ref{table:WNAPV}], but the other parameters remain with the same sign. This sign change in the parameter $h_{0}$ is reflected in the resulting pattern, e.g., we consider $\sigma = 1.5$, and the computed parameter values involved in the weakly nonlinear analysis is mentioned in Table \ref{table:WNAPV}. The numerical simulation result shows a cold-spot pattern for the prey population for the diffusion parameter $d = 0.172$ [see Fig. \ref{fig:pl2}(\subref{fig:pl2a})], near the Turing bifurcation threshold in the Turing domain. Figures \ref{fig:pl2}(\subref{fig:pl2b}) and (\subref{fig:pl2c}) show the cold-spot pattern for the lower diffusion parameters far from the Turing bifurcation threshold. Numerical simulation results show only the cold-spot pattern for the nonlocal model with $\sigma = 1.5$.

From Table \ref{table:WNAPV}, we see that the interval $(d_{4},d_{3})$ for the existence of the labyrinthine pattern becomes more narrow with an increase in the magnitude of $\sigma$. As a result, we observe that the thresholds corresponding to the hexagonal patterns ($d_{3}$ and the threshold for reentry of the hexagonal pattern) come closer, and then the labyrinthine pattern disappears. As a matter of fact, the amplitude equations fail to predict the selection of stripe pattern theoretically; however, it predicts the stationary hexagonal patterns near the Turing bifurcation threshold.

\section{Conclusions}

In this paper, we have studied the pattern formation for the local and nonlocal models, exemplifying our results on prey-predator models with hunting cooperation. Implicit analytical conditions have been derived for the existence of different temporal bifurcations for the diffusionless model, in particular, transcritical, saddle-node, and Hopf bifurcations. Moreover, a three-dimensional bifurcation diagram has been plotted to show these bifurcations numerically. The Turing bifurcation condition for the nonlocal model has been discussed analytically. Fixing the temporal parameters and the kernel parameter $\sigma$, we have derived the analytical equation for finding the critical wave number corresponding to the Turing bifurcation and this helps in finding the critical diffusion threshold. In particular, the Turing bifurcations for the local model are obtainable by taking the kernel parameter $\sigma = 0$.

The non-homogeneous stationary patterns exist for the local and nonlocal models in the Turing domain. The results of pattern selections can be identified through the weakly nonlinear analysis described in this manuscript. We have derived the amplitude equations for the nonlocal model near the Turing bifurcation threshold. Likewise, in the Turing bifurcation, the amplitude equations corresponding to the local model can be derived by taking $\sigma = 0$. We have compared the numerical simulation results with the theoretical results for the local model. They agreed near the Turing bifurcation threshold but differed far from the threshold. If we move from being close to the Turing bifurcation threshold to being further away the stationary pattern corresponding to the prey population changes from cold-spot $\rightarrow$ labyrinthine $\rightarrow$ hot-spot. The reentry of the hexagonal pattern (hot-spot) can not be predicted through the weakly nonlinear analysis. We have derived the amplitude equations for both prey and predator populations. With the help of weakly nonlinear analysis, we have shown that the prey and predator populations are negatively correlated with each other, and hence, the predator population shifts their patterns from hot-spot $\rightarrow$ labyrinthine $\rightarrow$ cold-spot when the parameter shifts being close to the Turing bifurcation threshold to being further away.

The pattern selection of the nonlocal model has also been discussed in this manuscript. With an increase in the nonlocal parameter $\sigma$, the Turing bifurcation curve shifts downwards but does not disappear; rather, it saturates to a curve. The weakly nonlinear analysis also predicts a tapered behaviour in the region where a labyrinthine pattern exists for the nonlocal model. Furthermore, numerical simulation results suggest the thresholds corresponding to reentry of the hexagonal pattern move towards the Turing bifurcation curve and eventually coincide with it. As a result, the stripe labyrinthine pattern disappears, and only the cold-spot hexagonal patterns exist for the nonlocal model. The reentry of the hexagonal pattern can not be identified through our described weakly nonlinear analysis due to neglecting some other primary slave modes, and it could be a possible extension of this work in the future. Finally, we note that the number of non-homogeneous stationary patches in a spatial domain of fixed size corresponding to a small value of the nonlocal parameter $\sigma$ is smaller compared to the higher values of $\sigma$.

\section*{Acknowledgements}
SP and RM are grateful to the NSERC and the CRC Program for their support. RM is also acknowledging support of the BERC 2022-2025 program and Spanish Ministry of Science, Innovation and Universities through the Agencia Estatal de Investigacion (AEI) BCAM Severo Ochoa excellence accreditation SEV-2017-0718 and the Basque Government fund AI in BCAM EXP. 2019/00432. This research was enabled in part by support provided by SHARCNET (www.sharcnet.ca) and Compute Canada (www.computecanada.ca).

\bibliography{References}

\begin{thebibliography}{10}
\expandafter\ifx\csname url\endcsname\relax
  \def\url#1{\texttt{#1}}\fi
\expandafter\ifx\csname urlprefix\endcsname\relax\def\urlprefix{URL }\fi
\expandafter\ifx\csname href\endcsname\relax
  \def\href#1#2{#2} \def\path#1{#1}\fi

\bibitem{kot2001}
M.~Kot, Elements of mathematical ecology, Cambridge University Press, 2001.

\bibitem{sherratt2014}
J.~A. Sherratt, Periodic traveling waves in integrodifferential equations for
  nonlocal dispersal, SIAM Journal on Applied Dynamical Systems 13~(4) (2014)
  1517--1541.

\bibitem{banerjee2016}
M.~Banerjee, V.~Volpert, Prey-predator model with a nonlocal consumption of
  prey, Chaos: An Interdisciplinary Journal of Nonlinear Science 26~(8) (2016)
  083120.

\bibitem{pal2018}
S.~Pal, S.~Ghorai, M.~Banerjee, Analysis of a prey-predator model with
  non-local interaction in the prey population, Bulletin of Mathematical
  Biology 80 (2018) 906--925.

\bibitem{bayliss2017}
A.~Bayliss, V.~Volpert, Complex predator invasion waves in a
  \mbox{H}olling-\mbox{T}anner model with nonlocal prey interaction, Physica D:
  Nonlinear Phenomena 346 (2017) 37--58.

\bibitem{pal2021}
S.~Pal, S.~Petrovskii, S.~Ghorai, M.~Banerjee, Spatiotemporal pattern formation
  in 2d prey-predator system with nonlocal intraspecific competition,
  Communications in Nonlinear Science and Numerical Simulation 93 (2021)
  105478.

\bibitem{pal2019}
S.~Pal, S.~Ghorai, M.~Banerjee, Effect of kernels on spatio-temporal patterns
  of a non-local prey-predator model, Mathematical Biosciences 310 (2019)
  96--107.

\bibitem{review}
M.~Banerjee, M.~Kuznetsov, O.~Udovenko, V.~Volpert, Nonlocal reaction-diffusion
  equations in biomedical applications, Acta Biotheoretica 70~(2) (2022) 1--28.

\bibitem{pal2021ad}
S.~Pal, R.~Melnik, Nonlocal models in the analysis of brain neurodegenerative
  protein dynamics with application to \mbox{A}lzheimer's disease, Preprint:
  Scientific Reports (2022).

\bibitem{eftimie2009}
R.~Eftimie, G.~de~Vries, M.~Lewis, Weakly nonlinear analysis of a hyperbolic
  model for animal group formation, Journal of mathematical biology 59~(1)
  (2009) 37--74.

\bibitem{sytnyk2021}
D.~Sytnyk, R.~Melnik, Mathematical models with nonlocal initial conditions: An
  exemplification from quantum mechanics, Mathematical and Computational
  Applications 26~(4) (2021) 73.

\bibitem{v2014}
V.~Volpert, Elliptic partial differential equations, Vol.~2, Springer, 2014.

\bibitem{paquin2020}
F.~Paquin-Lefebvre, On the weakly nonlinear analysis of coupled bulk-surface
  reaction-diffusion systems: theory, numerics, applications, Ph.D. thesis,
  University of British Columbia (2020).

\bibitem{pal2021iwbbio}
S.~Pal, R.~Melnik, The role of astrocytes in \mbox{A}lzheimer's disease
  progression, IWBBIO-2022 (2022).

\bibitem{murdoch2013}
W.~W. Murdoch, C.~J. Briggs, R.~M. Nisbet, Consumer-resource dynamics (mpb-36),
  in: Consumer-Resource Dynamics (MPB-36), Princeton University Press, 2013.

\bibitem{freedman1980deterministic}
H.~I. Freedman, Deterministic mathematical models in population ecology,
  Vol.~57, Marcel Dekker Incorporated, 1980.

\bibitem{fryxell2014}
J.~M. Fryxell, A.~R. Sinclair, G.~Caughley, Wildlife ecology, conservation, and
  management, John Wiley \& Sons, 2014.

\bibitem{abrams2000nature}
P.~A. Abrams, L.~R. Ginzburg, The nature of predation: prey dependent, ratio
  dependent or neither?, Trends in Ecology \& Evolution 15 (2000) 337--341.

\bibitem{lu2022}
M.~Lu, C.~Xiang, J.~Huang, H.~Wang, Bifurcations in the diffusive bazykin
  model, Journal of Differential Equations 323 (2022) 280--311.

\bibitem{alves2017hunting}
M.~T. Alves, F.~M. Hilker, Hunting cooperation and \mbox{A}llee effects in
  predators, Journal of theoretical biology 419 (2017) 13--22.

\bibitem{amturing}
A.~M. Turing, The chemical basis of morphogenesis, Bulletin of Mathematical
  Biology 52~(1) (1952) 153--197.

\bibitem{jdmurray}
J.~D. Murray, Mathematical Biology I. An Introduction, Springer, 2002.

\bibitem{Price}
C.~B. Price, Computing with active media: pattern formation and analysis by
  reaction-diffusion systems, neural fields and coupled map lattices, K.U.
  Leuven (1993).

\bibitem{Price1994}
C.~B. Price, Turing hexagons in pure cubic nonlinear fields, Physics Letters A
  194 (1994) 385--390.

\bibitem{Eckhaus}
W.~Eckhaus, Studies in non-linear stability theory, Springer, Berlin,
  Heidelberg (1965).

\bibitem{segel1976}
L.~A. Segel, S.~A. Levin, Application of nonlinear stability theory to the
  study of the effects of diffusion on predator-prey interactions, in: AIP
  conference proceedings, Vol.~27, American Institute of Physics, 1976, pp.
  123--152.

\bibitem{zhang2012}
X.-C. Zhang, G.-Q. Sun, Z.~Jin, Spatial dynamics in a predator-prey model with
  \mbox{B}eddington-\mbox{D}e\mbox{A}ngelis functional response, Physical
  Review E 85 (2012) 021924.

\bibitem{ipsen2000}
M.~Ipsen, F.~Hynne, P.~S{\o}rensen, Amplitude equations for reaction-diffusion
  systems with a \mbox{H}opf bifurcation and slow real modes, Physica D:
  Nonlinear Phenomena 136 (2000) 66--92.

\bibitem{gunaratne1994}
G.~H. Gunaratne, Q.~Ouyang, H.~L. Swinney, Pattern formation in the presence of
  symmetries, Physical Review E 50 (1994) 2802.

\bibitem{li2018}
X.~Li, G.~Hu, Z.~Feng, Pattern dynamics in a spatial predator-prey model with
  nonmonotonic response function, International Journal of Bifurcation and
  Chaos 28 (2018) 1850077.

\bibitem{lutscher2005}
F.~Lutscher, E.~Pachepsky, M.~A. Lewis, The effect of dispersal patterns on
  stream populations, Siam Review 47~(4) (2005) 749--772.

\bibitem{Perko}
L.~Perko, Diferential equations and dynamical systems, Springer, USA (2000).

\bibitem{ouyang2000}
Q.~Ouyang, Pattern formation in reaction-diffusion systems, Shanghai: Shanghai
  Sci-Tech Education Publishing House)(in Chinese) (2000).

\end{thebibliography}

\end{document}